\newcommand{\bn}{\bigskip\noindent}
\newtheorem{theorem}{\bf Theorem}[section]
\newtheorem{lemma}[theorem]{\bf Lemma}
\newtheorem{proposition}[theorem]{\bf Proposition}
\newtheorem{remark}[theorem]{\bf Remark}
\newcommand{\G}{\mathcal{G}}
\newcommand{\A}{\mathcal{A}}
\newcommand{\Z}{\mathcal{Z}}
\begin{document}
\title{On the work of Ritter and Weiss in comparison with Kakde's approach}

 \bn
\author{Otmar Venjakob}

\address{Universit\"{a}t Heidelberg,  Mathematisches Institut,  Im Neuenheimer Feld 288,  69120
Heidelberg,  Germany,
 http://www.mathi.uni-heidelberg.de/$\,\tilde{}\,$venjakob/}
\email{venjakob@mathi.uni-heidelberg.de}

\maketitle

\section{Introduction}

Almost simultaneously Ritter and Weiss \cite{RWIX} on the one hand and Kakde \cite{kakde} on the
other hand gave a proof of the non-commutative  Iwasawa main conjecture over totally real fields
for the Tate motive under the assumption that a certain $\mu$-invariant vanishes as has been
conjectured also by Iwasawa. Actually the article of Ritter and Weiss appeared slightly earlier on
the arXive and deals with the case of one-dimensional $p$-adic Lie-extensions, which can be
combined with Burns' well-known insight in \cite[thm.\ 2.1]{burns-MC} (this crucial idea has been
presented by Burns already during a seminar at the University of Kyoto in early 2006)  based on
Fukaya and Kato's result \cite[prop.\ 1.5.1]{fukaya-kato} to obtain the conjecture for general
$p$-adic (admissible) Lie-extensions.    Kakde's paper, which got its final version practically
during this workshop, contains - using the same result of Fukaya and Kato - directly a full proof
in the general situation. Various special cases have been known by the work of (in alphabetical
order) Hara \cite{hara}, Kakde \cite{kakde-cases}, Kato \cite{kato-heisenberg} as well as Ritter
and Weiss \cite{RWVIII}.

  Since in both
approaches the reduction steps from the general case of one-dimensional extensions to the
$l$-elementary extensions (in the language of Ritter and Weiss) or to essential pro-$p$-extensions
(in the language of Kakde) are based on the same principles (even though at different places: for
Ritter and Weiss with respect to the Hom-description while in Kakde's case for the $K$-theory) and
since again the generalisation from the pure pro-$p$ case to this slightly more general case
follows certain standard techniques, see \cite{suja-workshopms}, we restrict in this survey from
the very beginning to the one-dimensional pro-$p$-case. In these notes we shall use the same
notation as in \cite{sch-venkakde}, but for the convenience of the reader we have collected the
crucial notation from \cite{RWIX} in a glossary below comparing it with Kakde's and our notation,
respectively.

Since during the workshop and hence in this volume Kakde's approach has been discussed in great
detail and we may hence  assume greater familiarity with his  techniques, many comments in this
comparison will be made from the perspective of Kade's point of view. So, we want to stress that
this certainly does not reflect the historical development as Kakde has probably been influenced by
a couple of ideas from Ritter and Weiss. E.g.\ the analytic techniques for the actual proof that
the abelian $p$-adic $L$-functions satisfy the required conditions in order to induce the
``non-commutative'' $p$-adic $L$-function has been applied first by Ritter and Weiss as well as by
Kato (in an unpublished preprint about Heisenberg type $p$-adic Lie group extensions). For that
reason we won't discuss this analytic part at all in this comparison because again the methods are
essentially the same, see \cite[\S 3]{RWIX} and \cite[\S 6]{kakde}, respectively.

One evident difference  between the two works consists of the way of presentation: While Kakde, who
also had partial results in previous publications,  delivers an almost self-contained account of a
full proof of the main conjecture (even dealing with arbitrary admissible $p$-adic Lie extensions)
in one    ingenious paper, the impressive work of Ritter and Weiss is spread over at least 10
articles \cite{RWt,RWI, RWII, RWIII, RWIV, RWV, RWVI, RWVII, RWVIII, RWIX}, viz naturally in the
way as their theory has been developed over the recent decade. The last article, which contains the
general result, is rather an instruction how to modify and extend the proofs of earlier results (in
less general cases) in previous publications combined with an extensive discussion of the new
M\"{o}bius-Wall-congruence in order to complete the proof in the general case than a self-contained
proof. A general outline of the overall strategy of the proof is missing and the reader is forced
even to collect the notation from all the other articles. This is somewhat unfortunate as otherwise
the strategy of their nice proof could have been much more easily accessible for the
reader.\footnote{In order to remedy this Ritter and Weiss are apparently thinking about writing a
lecture note volume about their approach.}

Another obvious difference is the fact that while Kakde describes $K_1(\Lambda(\mathcal{G}))$
modulo $SK_1(\Lambda(\mathcal{G}))$ in terms of certain relations (some call them congruences)
among elements in the units of Iwasawa algebras for certain abelian subquotients of $\mathcal{G}$
Ritter and Weiss are only interested whether the particular system of abelian pseudomeassures is in
the image of $K_1. $ But in the last section of this survey we try to clarify what can be proved by
the methods of Ritter and Weiss towards describing the image of $K_1$ in the product of the $K_1$'s
of appropriate  subquotients of $\mathcal{G}$ in the manner of Kakde.

I am very grateful to Cornelius Greither and Mahesh Kakde for reading an earlier version and
suggesting a couple of improvements.

\section{Glossary}  \hspace{.5cm}

In the following the notation in the different columns usually denote the same object, but
sometimes it only indicates that they are just closely related.

  $$
  \begin{array}{cp{2cm}cc}
   \mathrm{Kakde}                                        && \mathrm{Ritter/Weiss}\\
   \hline &&\\
    &&\\ 
    p \neq 2                                    && l \neq 2\\
    &&\\
    F_\infty/F                                  && K/k\\
    &&\\
    \mathcal{G} = G(F_{\infty} / F)            && G=G(K/k)\\
    &&\\
    F^{\mathrm{cyc}}                            && k_\infty\\
    &&\\
    \Gamma                                           && \Gamma_k\\
    &&\\
    H=G(F_\infty/F^{\rm cyc})                   && H\\
    &&\\
    A:=A(\mathcal{G}):=\Lambda(\mathcal{G})_S=\Lambda(\mathcal{G})_{(p)}       && \Lambda_{\bullet} (G)\\
    &&\\
    B:=B(\mathcal{G})=\widehat{\Lambda(\mathcal{G})_S} && \Lambda_{\wedge} (G)\\
    &&\\
    \Lambda({\mathcal{G}})_{S^*}=A(\mathcal{G})[\frac{1}{p}] && \mathcal{Q}G\\
    &&\\
    \mathcal{O}/\mathbb{Z}_p\ \mathrm{unramified,}\; (\Lambda=\Lambda_\mathcal{O})&& \frak{O}\\
        &&\\
    \Sigma\supseteq\Sigma_{\rm ram}(F_\infty/F)
    \;\;\;\;\; {\mbox{finite set of}\atop{\mbox{primes of $F$}} }
    && S\supseteq S_{\rm ram}(K/k)\cup S_\infty
    \;\;\;\;\;  {\mbox{finite set of}\atop{\mbox{primes of $k$}}}\\
    &&\\
    K_0(\Lambda(\mathcal{G}),\Lambda(\mathcal{G})_S)=
    K_0(\mathcal{H}_S)=K_0(\mathcal{C}_S) && K_0T\,\Lambda(\mathcal{G})\\
    &&\\
    {[R{\Gamma}(G(F_\Sigma/F_\infty),\ \mathbb{Q}_p/\mathbb{Z}_p)^{\vee}]}     && \mho=\mho_S\mbox{ (``mhO'' or ``agemO'')}\\
    &&\\
    \zeta_{F_\infty/F}     && \Theta,\lambda_{F_\infty/F}\ (\mathrm{pseudomeasure)}  \\
                           && \mbox{if } F_\infty/F \mbox{ is abelian}\\
    &&\\
    \mathcal{O}[[ Cong\ (\mathcal{G})]]=\Lambda_\mathcal{O}  (\mathcal{G}   )^{\mathrm{ab}}
                           && T \Lambda_{\frak{O}}(\mathcal{G})
  \end{array}
  $$
 \goodbreak
  $$
  \begin{array}{ccc}
       \parbox[t]{8cm}{$\mu=0 \Leftrightarrow $ There exists an open pro-$p$ subgroup $ H'\subset H$
         such that $X_\mathcal{F}=G(\mathfrak{F}_\Sigma/\mathfrak{F})^{\mathrm{ab}}(p)$   is finitely generated
       over $ \mathbb{Z}_p$   for $\mathfrak{F}= F_\infty^{H'}$ and $\mathfrak{F}_\Sigma$ the maximal outside
 $ \Sigma$   unramified extension of $\mathfrak{F}.$}

     &  & \parbox[t]{6cm}{$\phantom{mmmm}\mu_{{\Gamma}'}(X_{F_\infty})=0\phantom{mmmmm}$
          for some open subgroup $ \Gamma'\subset\Gamma$}
 \end{array}
  $$

  \[\begin{array}{ccc}
      \mbox{Kakde - Schneider/Venjakob or this article }& & \mbox{Ritter/Weiss}\\
       \hline
       &&\\
     \pi^U_V,\; pr^U_V  & U\twoheadrightarrow V& \mathrm{defl}^V_U\\
     &&\\
     N^U_V & V\subseteq U & \mathrm{res}^V_U\\
     &&\\
    \tilde{\mathrm{Tr}}^U_V && \mathrm{Res}^V_U \\
    &&\\
     L,\; L_B && \mathbb{L}\\
     &&\\
     \mathcal{L} && {\bf L},\; \prod_U \mathbb{L}_U \\
\end{array}
\]
\section{2-extensions  versus perfect complexes}

The statement of the Main Conjecture (MC) affirms in both approaches the existence of an element
$\Theta,$ respectively $\zeta \in K_1(\Lambda(\mathcal{G})_S)$ satisfying firstly a certain
interpolation property we shall discuss later and secondly is mapped under the connecting
homomorphism of the localisation sequence of $K$-theory
\[K_1(\Lambda(\mathcal{G})_S)\to K_0(\mathcal{H}_S)\cong K_0(\mathcal{C}_S)\]
  to the class $\mho_S,$ respectively $[R{\Gamma}(G(F_\Sigma/F_\infty),\
\mathbb{Q}_p/\mathbb{Z}_p)^{\vee}  ].$ To be more precise, one uses in general the localisation
with respect to the Ore set $S^*$ for the statement of the MC, but assuming the vanishing of $\mu,$
it is easily reduced in both approaches to the above statement.
Thus the first question which naturally arises is:\\

Why do the classes of $R{\Gamma}(G(F_\Sigma/F_\infty),\ \mathbb{Q}_p/\mathbb{Z}_p)^{\vee} $ and
$\mho=\mho_\Sigma$ in  the relative $K_0$ agree?

Let $\Omega$ denote the maximal outside $\Sigma$ unramified $p$-extension of $F_\infty^\Gamma;$
here we fixed a splitting $\Gamma\subseteq \mathcal{G}$ of the projection $G\twoheadrightarrow
\Gamma.$ We denote the Galois group $G(\Omega/F)$ by $G'_\Sigma$ and note that this group is
well-known to be (topologically) finitely generated. Hence, choosing $d$ generators we obtain the
following commutative diagram

\[\xymatrix{
    & 1 \ar[d]_{ }   & 1 \ar[d]_{ }   &  &   \\
  & N \ar[d]_{ } \ar@{=}[r]   & N \ar[d]_{ }   &   &   \\
  1   \ar[r]^{ } & R \ar[d]_{ } \ar[r]^{ } & F_d\ar[d]_{ } \ar[r]^{ } & {\mathcal{G}} \ar@{=}[d]_{ } \ar[r]^{ } & 1 \phantom{,} \\
  1  \ar[r]^{ } & H'_\Sigma\ar[d]_{ } \ar[r]^{ } & G'_{\Sigma} \ar[d]_{ } \ar[r]^{ } & {\mathcal{G}}    \ar[r]^{ } &1,  \\
    & 1   & 1   &   &    }\]
    in which $F_d$ denotes the free pro-finite group on $d$ elements and all the other groups are
    defined by exactness of the diagram. By \cite[prop.\ 5.6.6]{nsw} there is a canonical complex
    associated to the above diagram
  \[\xymatrix{
    0 \ar[r]^{ } & N^{\mathrm{ab}}(p) \ar[r]^{ } & {\mathbb{Z}_p[[G'_{\Sigma}]]^d} \ar[r]^{ } &   {\mathbb{Z}_p[[G'_{\Sigma}]]}\ar[r]^{ } & 0   }\]
which forms a resolution of $\mathbb{Z}_p$ by projective $\mathbb{Z}_p[[G'_{\Sigma}]]$-modules,
whence flat $\mathbb{Z}_p[[H'_\Sigma]]$-modules. Therefore, the complex
\[\xymatrix{
    0 \ar[r]^{ } & N^{\mathrm{ab}}(p)_{H'_\Sigma} \ar[r]^{ } & {\mathbb{Z}_p[[\mathcal{G}]]^d} \ar[r]^{ } &   {\mathbb{Z}_p[[\mathcal{G}]]}\ar[r]^{ } & 0 ,  }\]
which arises by taking $H'_\Sigma$-coinvariants, represents
\[R{\Gamma}(G(F_\Sigma/F_\infty),\ \mathbb{Q}_p/\mathbb{Z}_p)^{\vee}\cong R{\Gamma}(H'_\Sigma,\
\mathbb{Q}_p/\mathbb{Z}_p)^{\vee}\] by Pontryagin duality and using a theorem of Neumann.

By \cite[prop.\ 5.6.7]{nsw} and using the weak Leopoldt statement $H_2(H'_\Sigma,\mathbb{Z}_p)=0$
there is a quasi-isomorphism
\[\xymatrix{
    0 \ar[r]^{ } & N^{\mathrm{ab}}(p)_{H'_\Sigma} \ar[r]^{ }\ar[d] & {\mathbb{Z}_p[[\mathcal{G}]]^d} \ar[r]^{ } \ar[d]&   {\mathbb{Z}_p[[\mathcal{G}]]}\ar[r]^{ }\ar@{=}[d] & 0 \\
     0 \ar[r]^{ }  & 0\ar[r] & Y \ar[r]^{d_Y} & {\mathbb{Z}_p[[\mathcal{G}]]} \ar[r] & 0,  }\]
where $Y:= I_{G'_{\Sigma}}/I_{H'_\Sigma} I_{G'_{\Sigma}}$ and the map
$d_Y:Y\to\mathbb{Z}_p[[\mathcal{G}]]$ factorises over the augmentation ideal as
\[Y\twoheadrightarrow I_\mathcal{G}\hookrightarrow \mathbb{Z}_p[[\mathcal{G}]].\]

Now consider the exact sequence of (vertical) complexes

\[\xymatrix{
  0   \ar[r]^{ } &   {\mathbb{Z}_p[[\mathcal{G}]]}\ar@{=}[d]_{ } \ar[r]^{\psi } & Y \ar[d]_{ d_Y} \ar[r]^{ } & M \ar[d]_{ } \ar[r]^{ } & 0  \\
  0 \ar[r]^{ } & {\mathbb{Z}_p[[\mathcal{G}]]} \ar[r]^{\psi' } & {\mathbb{Z}_p[[\mathcal{G}]]} \ar[r]^{ } & N \ar[r]^{ } & 0,   }\]
  where $\psi$ is a suitably chosen homomorphism such that   $\psi':=d_Y\circ\psi,$ which has by definition image in
  $I_\mathcal{G},$ is injective. Then,  by definition
  \[\mho=[M]-[N] = [M\to N]\in K_0(\mathcal{H}_S)\cong K_0(\mathrm{Ch}^b(\mathcal{H}_S)),\]

  where $\mathcal{H}_S$ denotes the exact category of $S$-torsion $\Lambda(\mathcal{G})$-modules with
  finite projective dimension while $\mathrm{Ch}^b(\mathcal{H}_S)$ denotes the category of bounded
  complexes in $\mathcal{H}_S$ and the above identification is shown in \cite[II.~ thm.\ 9.2.2]{weibel-k}. We also shall write $\mathcal{C}_S:=\mathrm{Ch}_{\mathrm{perf},S}(\Lambda(\mathcal{G}))$ for the category of perfect
   complexes of $\Lambda(\mathcal{G})$-modules which become acyclic after tensoring  with
   $\Lambda(\mathcal{G})_S.$ Under the identification
   $K_0(\mathrm{Ch}^b(\mathcal{H}_S))=K_0(\mathrm{Ch}_{\mathrm{perf},S}(\Lambda(\mathcal{G})))$ (cp.\ \cite[II exerc.\ 9.2, V exerc.\ 3.14]{weibel-k}) we then have
\begin{eqnarray*}
[M\to N]&=&[Y\to \mathbb{Z}_p[[\mathcal{G}]]\;] -[\mathbb{Z}_p[[\mathcal{G}]]\to \mathbb{Z}_p[[\mathcal{G}]]\;] \\
&=&[Y\to \mathbb{Z}_p[[\mathcal{G}]]\;]   \\
&=&[R{\Gamma}(G(F_\Sigma/F_\infty),\ \mathbb{Q}_p/\mathbb{Z}_p)^{\vee}]
\end{eqnarray*}
because the complex $\mathbb{Z}_p[[\mathcal{G}]]\to \mathbb{Z}_p[[\mathcal{G}]]$ is acyclic and
quasi-isomorphisms induce identities in $K_0.$ Thus we have shown the following

\begin{proposition}
\[\mho=[R{\Gamma}(G(F_\Sigma/F_\infty),\ \mathbb{Q}_p/\mathbb{Z}_p)^{\vee}]\phantom{m}\mbox{ in }\phantom{m} K_0(\mathcal{H}_S)\cong K_0(\mathrm{Ch}_{\mathrm{perf},S}(\Lambda(\mathcal{G}))).\]
\end{proposition}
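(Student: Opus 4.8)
The plan is to represent both classes by one and the same explicit two-term complex of $\Lambda(\mathcal{G})$-modules, namely $[\,Y\to\mathbb{Z}_p[[\mathcal{G}]]\,]$ with $Y=I_{G'_\Sigma}/I_{H'_\Sigma}I_{G'_\Sigma}$; since quasi-isomorphic perfect complexes and the addition of acyclic complexes leave the class in $K_0(\mathcal{H}_S)\cong K_0(\mathcal{C}_S)$ unchanged, the proposition follows once this common representative has been produced on each side.

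For the class $[R\Gamma(G(F_\Sigma/F_\infty),\mathbb{Q}_p/\mathbb{Z}_p)^\vee]$ I would fix a splitting $\Gamma\hookrightarrow\mathcal{G}$, take $\Omega$ to be the maximal outside-$\Sigma$ unramified $p$-extension of $F_\infty^\Gamma$, and set $G'_\Sigma=G(\Omega/F)$, which is topologically finitely generated. A choice of $d$ generators gives the two compatible extensions $1\to R\to F_d\to\mathcal{G}\to 1$ and $1\to H'_\Sigma\to G'_\Sigma\to\mathcal{G}\to 1$ of the displayed diagram, and \cite[prop.\ 5.6.6]{nsw} attaches to them the canonical free resolution $0\to N^{\mathrm{ab}}(p)\to\mathbb{Z}_p[[G'_\Sigma]]^d\to\mathbb{Z}_p[[G'_\Sigma]]\to 0$ of $\mathbb{Z}_p$, whose terms are flat over $\mathbb{Z}_p[[H'_\Sigma]]$. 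Taking $H'_\Sigma$-coinvariants therefore computes $R\Gamma(H'_\Sigma,\mathbb{Z}_p)$, and after Pontryagin duality and the identification $R\Gamma(G(F_\Sigma/F_\infty),\mathbb{Q}_p/\mathbb{Z}_p)^\vee\cong R\Gamma(H'_\Sigma,\mathbb{Q}_p/\mathbb{Z}_p)^\vee$ (a theorem of Neumann) one obtains an explicit perfect complex of $\Lambda(\mathcal{G})$-modules in this class. Invoking \cite[prop.\ 5.6.7]{nsw} together with the weak Leopoldt vanishing $H_2(H'_\Sigma,\mathbb{Z}_p)=0$ then yields a quasi-isomorphism from the coinvariants complex onto $[\,Y\xrightarrow{d_Y}\mathbb{Z}_p[[\mathcal{G}]]\,]$, in which $d_Y$ factors through the augmentation ideal as $Y\twoheadrightarrow I_\mathcal{G}\hookrightarrow\mathbb{Z}_p[[\mathcal{G}]]$; hence $[R\Gamma(G(F_\Sigma/F_\infty),\mathbb{Q}_p/\mathbb{Z}_p)^\vee]=[\,Y\to\mathbb{Z}_p[[\mathcal{G}]]\,]$.

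It then remains to recognise $\mho$ in the same terms. By the construction used by Ritter and Weiss, $\mho=[M]-[N]=[M\to N]$, where $M$ and $N$ are the cokernels in the short exact sequence of vertical two-term complexes with rows $0\to\mathbb{Z}_p[[\mathcal{G}]]\xrightarrow{\psi}Y\to M\to 0$ and $0\to\mathbb{Z}_p[[\mathcal{G}]]\xrightarrow{\psi'}\mathbb{Z}_p[[\mathcal{G}]]\to N\to 0$, with $\psi$ chosen so that $\psi'=d_Y\circ\psi$ is injective; the left-hand column of this sequence is the acyclic complex $[\mathbb{Z}_p[[\mathcal{G}]]\xrightarrow{\mathrm{id}}\mathbb{Z}_p[[\mathcal{G}]]]$. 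Additivity of the class along this sequence — using $K_0(\mathrm{Ch}^b(\mathcal{H}_S))\cong K_0(\mathcal{C}_S)$ and that acyclic complexes vanish in $K_0$ — gives $[M\to N]=[\,Y\to\mathbb{Z}_p[[\mathcal{G}]]\,]$, and comparing with the previous step produces $\mho=[R\Gamma(G(F_\Sigma/F_\infty),\mathbb{Q}_p/\mathbb{Z}_p)^\vee]$, as claimed.

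I expect the real content to lie not in the formal $K$-theory at the end but in the middle step: producing the quasi-isomorphism of \cite[prop.\ 5.6.7]{nsw} and recognising that the two-term complex $[\,Y\to\mathbb{Z}_p[[\mathcal{G}]]\,]$ appearing there is \emph{literally} the one out of which $\mho$ is built. This is where the precise shape of the canonical resolutions in \cite{nsw} and the weak Leopoldt hypothesis $H_2(H'_\Sigma,\mathbb{Z}_p)=0$ are essential, the latter being exactly what makes the three-term complex collapse to a two-term one. A smaller point to verify is that $\psi$ can be chosen with $\psi'=d_Y\circ\psi$ injective, so that the lower row is genuinely short exact and $N$ is an $S$-torsion module of finite projective dimension; here one uses that the augmentation ideal of $\mathbb{Z}_p[[\mathcal{G}]]$ contains non-zero-divisors coming from the $\mathbb{Z}_p$-direction $\Gamma$.
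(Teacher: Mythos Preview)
Your proposal is correct and follows essentially the same route as the paper: both represent $R\Gamma(G(F_\Sigma/F_\infty),\mathbb{Q}_p/\mathbb{Z}_p)^\vee$ by the two-term complex $[\,Y\to\mathbb{Z}_p[[\mathcal{G}]]\,]$ via the canonical resolution of \cite[prop.\ 5.6.6]{nsw}, the quasi-isomorphism of \cite[prop.\ 5.6.7]{nsw} together with weak Leopoldt, and then identify $\mho=[M\to N]$ with this same complex by additivity along the short exact sequence of vertical complexes, the acyclic column $[\mathbb{Z}_p[[\mathcal{G}]]\xrightarrow{\mathrm{id}}\mathbb{Z}_p[[\mathcal{G}]]]$ contributing zero. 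Your closing remarks on where the content lies and on the choice of $\psi$ are accurate and match the paper's implicit reasoning.
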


  \section{Hom-description}\label{homdescription}
We shall write $ {\rm{Irr}}(\mathcal{G})\ $ for the set of $\ \overline{\mathbb{Q}}_p$-valued
irreducible representations of $\mathcal{G}$     with finite image. Considering elements of $
K_1(\Lambda(\mathcal{G}))$ as maps on (irreducible) representations we obtain a natural
homomorphism
\[\mathrm{Det}: K_1(\Lambda(\mathcal{G}))\to \mathrm{Maps}({\mathrm{Irr}}(\mathcal{G}),\overline{\mathbb{Z}_p}^\times)\]
into maps from ${\mathrm{Irr}}(\mathcal{G})$ to the units of the ring of integers
$\overline{\mathbb{Z}_p}$ of $\ \overline{\mathbb{Q}}_p,$ which allows for example requiring
interpolation properties for elements of the $K$-group. Of course, the target is ``much bigger''
than the image and one crucial question for the proof of the main conjecture consists of finding
the most appropriate  intermediate target in which to work. Ritter and Weiss follow the philosophy
of Fr\"{o}hlich's so called Hom-description:
\[\mathrm{Hom}_{G_{\mathbb{Q}_p}}(R(\mathcal{G}),\overline{\mathbb{Z}_p}^\times)\subseteq  \mathrm{Hom}(R(\mathcal{G}),\overline{\mathbb{Z}_p}^\times)= \mathrm{Maps}({\mathrm{Irr}}(\mathcal{G}),\overline{\mathbb{Z}_p}^\times),\]
$R(\mathcal{G})$ being the group of virtual characters, i.e., the free
   abelian group generated by ${\mathrm{Irr}}(\mathcal{G}),$ and they refine it slightly by an Iwasawa-theoretic variant
\[\xymatrix{
  K_1(\Lambda(\mathcal{G}) \ar[r]^(0.3){\mathrm{Det}} & {\mathrm{Hom}_{G_{\mathbb{Q}_p}, \, R(\Gamma)}} (R(\mathcal{G}), \Lambda_{\overline{\mathbb{Z}_p}}{(\Gamma)}^{\times})}\subseteq \mathrm{Hom}_{G_{\mathbb{Q}_p}}(R(\mathcal{G}),\overline{\mathbb{Z}_p}^\times)\]
whose definition will be recalled in the case of the localised Iwasawa algebra
$\Lambda(\mathcal{G})_{S^*}$ at the end of this section.

In contrast, Kakde uses a version which is closer to the side of $K$-theory
\[\xymatrix{
  K_1(\Lambda(\mathcal{G})) \ar[r]^(0.2){\theta} & {\left(\prod_{U\in S(\mathcal{G},\mathcal{Z})} K_1(\Lambda(U^{\mathrm{ab}}))\right)^\mathcal{G}= \left(\prod_{U\in S(\mathcal{G},\mathcal{Z})}  \Lambda(U^{\mathrm{ab}})^\times \right)^\mathcal{G}}} \]
where $\mathcal{G}$ acts by inner conjugation on $S(\mathcal{G},\mathcal{Z}), $   the  set of all
subgroups $\Z \subseteq U \subseteq \G$ for a (fixed) central subgroup $\Z \subseteq \G$, and among
the Iwasawa algebras: $(x_U)_U\mapsto (g^{-1}x_{gUg^{-1}}g)_U.$ Recall from \cite[\S
4]{sch-venkakde} that the component $\theta_U$ of $\theta$ is the composite
\begin{equation*}
    \theta_U : K_1(\Lambda(\G)) \xrightarrow{\; N^\G_U \;} K_1(\Lambda(U)) \longrightarrow K_1(\Lambda(U^{\mathrm{\mathrm{ab}}})) = \Lambda(U^{\mathrm{\mathrm{ab}}})^\times
\end{equation*}
of the norm map and the homomorphism  induced by the canonical surjection $U \twoheadrightarrow
U^{\mathrm{ab}}.$

By explicit Brauer induction \cite{boltje,snaith} we obtain a splitting $a_\mathcal{G}$ of the
canonical map
\[(\oplus_{U\in S(\mathcal{G},\mathcal{Z})} R(U^{\mathrm{ab}}/\mathcal{Z}))_\mathcal{G} \to R(\mathcal{G}/\mathcal{Z}),\]
which  is induced by sending a tuple $(n_U\chi_U)$ of one-dimensional characters $\chi_U$ of
$U/\mathcal{Z}$ and integers $n_U$ to $\sum n_U
\mathrm{Ind}^{\mathcal{G}/\mathcal{Z}}_{U/\mathcal{Z}}(\chi_U);$ by abuse of notation we shall also
write $\chi_U$ for the associated  character $\mathrm{infl}_{U/\mathcal{Z}}^U(\chi)$ of $U$ by
inflation. Again, $\mathcal{G}$ is acting by inner conjugation: $g(n_U\chi_U)_{U\in
S(\mathcal{G},\mathcal{Z})}:=(n_{gUg^{-1}}\chi_{gUg^{-1}}(g\cdot g^{-1}))_{U\in
S(\mathcal{G},\mathcal{Z})},$ where $\cdot$ denotes the missing argument.   Hence, using also the
fact \cite[lem.\ 92]{kakde} that we have a surjection
\begin{equation}\label{surj}\mathrm{Irr}(\Gamma)\times \mathrm{Irr}(\mathcal{G}/\mathcal{Z})
\twoheadrightarrow \mathrm{Irr}(\mathcal{G}),\quad (\chi,\rho)\mapsto
\mathrm{infl}_\Gamma^\mathcal{G}(\chi)\cdot \mathrm{infl}_{\mathcal{G}/\Z}^\mathcal{G}(\rho)
\end{equation} and hence a surjection
\[R(\Gamma)\otimes_{\mathbb{Z}}R(\mathcal{G}/\mathcal{Z}) \twoheadrightarrow R(\mathcal{G}),\] we may define a homomorphism
\[\mathrm{BrInd}: \left(\prod_{U\in S(\mathcal{G},\mathcal{Z})}  \Lambda(U^{\mathrm{ab}})^\times \right)^\mathcal{G} \to  {\mathrm{Hom}_{G_{\mathbb{Q}_p}, \, R(\Gamma)}} (R(\mathcal{G}), \Lambda_{\overline{\mathbb{Z}_p}}(\Gamma)^{\times}) \]
such that \[\mathrm{BrInd}((x_U)_U)(\rho):=\prod_U
\mathrm{Det}_{U^{\mathrm{ab}}}(x_U)(\chi_U)^{n_U},\] if $a_\mathcal{G}(\rho)$ is represented by
$(n_U \chi_U)_{U\in S(\mathcal{G},\mathcal{Z})}$ (and $\rho$ is induced from $\mathcal{G}/\Z$ via
inflation). Here $\mathrm{Det}_{U^{\mathrm{ab}}}$ denotes the natural map
\[\mathrm{Det}_{U^{\mathrm{ab}}}: K_1(\Lambda(U^{\mathrm{ab}}))\to \mathrm{Hom}_{G_{\mathbb{Q}_p}, \, R(\Gamma_U)} (R(U^{\mathrm{ab}}),
\Lambda_{\overline{\mathbb{Z}_p}}{(\Gamma_U)}^{\times})\] with $\Gamma_U$ the image of $U$ under the fixed
projection $\mathcal{G}\twoheadrightarrow \Gamma$ and we use the embedding
$\Lambda_{\overline{\mathbb{Z}_p}}{(\Gamma_U)}\subseteq
\Lambda_{\overline{\mathbb{Z}_p}}{(\Gamma)}$ in order to consider the values of
$\mathrm{Det}_{U^{\mathrm{ab}}}$ in $\Lambda_{\overline{\mathbb{Z}_p}}(\Gamma)^\times.$

\begin{lemma}
The map $\mathrm{BrInd}$ is well-defined and $ \mathrm{BrInd}((x_U)_U)$ is $G_{\mathbb{Q}_p}$- and
$R(\Gamma)$-invariant, where these action are recalled at the end of this section.
\end{lemma}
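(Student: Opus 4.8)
The plan is to verify the two kinds of invariance separately, since ``well-definedness'' of $\mathrm{BrInd}$ already means checking that the formula $\mathrm{BrInd}((x_U)_U)(\rho)=\prod_U \mathrm{Det}_{U^{\mathrm{ab}}}(x_U)(\chi_U)^{n_U}$ does not depend on the chosen representative $(n_U\chi_U)$ for $a_\mathcal{G}(\rho)$, but in fact $a_\mathcal{G}$ is a fixed splitting, so the representative is canonical and the only issue is that a virtual character $\rho\in R(\mathcal{G})$ that is inflated from $\mathcal{G}/\mathcal{Z}$ may a priori be so inflated in several ways, or may be expressible through the surjection $R(\Gamma)\otimes R(\mathcal{G}/\mathcal{Z})\twoheadrightarrow R(\mathcal{G})$ non-uniquely; one must check the value is insensitive to this. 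First I would reduce to the case $\rho=\mathrm{infl}(\bar\rho)$ with $\bar\rho$ an honest character of $\mathcal{G}/\mathcal{Z}$ and then to the basis elements $\bar\rho\in\mathrm{Irr}(\mathcal{G}/\mathcal{Z})$, because $a_\mathcal{G}$ and all the maps in sight are additive; on a basis element $a_\mathcal{G}(\bar\rho)$ has a genuinely canonical representative, so on such $\rho$ there is nothing to check, and additivity then propagates well-definedness to all of $R(\mathcal{G})$ provided the extension ``along $R(\Gamma)$'' is compatible — this is exactly the point of working in the $R(\Gamma)$-equivariant Hom-group, and it is legitimate because $\mathrm{Det}_{U^{\mathrm{ab}}}(x_U)$ is $R(\Gamma_U)$-equivariant by construction and the surjection (\ref{surj}) twists a character of $\mathcal{G}/\mathcal{Z}$ by a character of $\Gamma$ in a way that $\prod_U\mathrm{Det}_{U^{\mathrm{ab}}}(x_U)(-)^{n_U}$ turns into multiplication by the corresponding element of $\Lambda_{\overline{\mathbb{Z}_p}}(\Gamma)$, so any two preimages under (\ref{surj}) give the same value.

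Next I would prove $R(\Gamma)$-invariance of $\mathrm{BrInd}((x_U)_U)$ directly from the defining formula. Twisting $\rho$ by $\psi\in\mathrm{Irr}(\Gamma)$, i.e.\ replacing $\rho$ by $\mathrm{infl}_\Gamma^\mathcal{G}(\psi)\cdot\rho$, replaces the Brauer-induction data by $(n_U\,(\psi|_{\Gamma_U}\cdot\chi_U))_U$ because explicit Brauer induction commutes with twisting by one-dimensional characters that are inflated from a quotient through which everything factors; then $\mathrm{Det}_{U^{\mathrm{ab}}}(x_U)(\psi|_{\Gamma_U}\cdot\chi_U)=\psi\cdot\mathrm{Det}_{U^{\mathrm{ab}}}(x_U)(\chi_U)$ by the $R(\Gamma_U)$-equivariance of $\mathrm{Det}_{U^{\mathrm{ab}}}$ together with the embedding $\Lambda_{\overline{\mathbb{Z}_p}}(\Gamma_U)\subseteq\Lambda_{\overline{\mathbb{Z}_p}}(\Gamma)$, and the product over $U$ picks up the single factor $\psi^{\sum_U n_U}$, which is exactly $\psi$ to the power $\dim\rho$ as required for $R(\Gamma)$-equivariance. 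For $G_{\mathbb{Q}_p}$-invariance I would argue the same way: $\omega\in G_{\mathbb{Q}_p}$ acts on $\mathrm{Irr}(\mathcal{G})$ and on $\mathrm{Irr}(U^{\mathrm{ab}})$ compatibly via its action on the coefficient field, the explicit Brauer induction map $a_\mathcal{G}$ is Galois-equivariant (Boltje's canonical section is defined over $\mathbb{Z}$, hence commutes with the $G_{\mathbb{Q}_p}$-action on characters), and each $\mathrm{Det}_{U^{\mathrm{ab}}}$ is $G_{\mathbb{Q}_p}$-equivariant by its own construction; chasing these through the product formula gives $\mathrm{BrInd}((x_U)_U)(\omega\rho)=\omega(\mathrm{BrInd}((x_U)_U)(\rho))$.

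The one place that needs genuine care — and which I expect to be the main obstacle — is the interaction with the $\mathcal{G}$-invariance built into the source $\bigl(\prod_{U\in S(\mathcal{G},\mathcal{Z})}\Lambda(U^{\mathrm{ab}})^\times\bigr)^\mathcal{G}$ and the corresponding $\mathcal{G}$-action on $\bigl(\oplus_U R(U^{\mathrm{ab}}/\mathcal{Z})\bigr)_\mathcal{G}$. One must check that conjugation $g\colon U\mapsto gUg^{-1}$, $\chi_U\mapsto\chi_{gUg^{-1}}(g\cdot g^{-1})$ on the Brauer data matches the action $(x_U)_U\mapsto(g^{-1}x_{gUg^{-1}}g)_U$ on the tuple, so that $\prod_U\mathrm{Det}_{U^{\mathrm{ab}}}(x_U)(\chi_U)^{n_U}$ only depends on the $\mathcal{G}$-orbit and hence descends through the coinvariants appearing implicitly when one passes from a representative of $a_\mathcal{G}(\rho)$ to the value; concretely one needs $\mathrm{Det}_{(gUg^{-1})^{\mathrm{ab}}}(g^{-1}x_{gUg^{-1}}g)$ evaluated at $\chi_U$ to equal $\mathrm{Det}_{U^{\mathrm{ab}}}(x_{gUg^{-1}})$ evaluated at the conjugated character, which is a formal consequence of the transport-of-structure compatibility of $\mathrm{Det}$ with isomorphisms $U^{\mathrm{ab}}\xrightarrow{\sim}(gUg^{-1})^{\mathrm{ab}}$ but must be spelled out because the target groups $\Lambda_{\overline{\mathbb{Z}_p}}(\Gamma_U)^\times$ and $\Lambda_{\overline{\mathbb{Z}_p}}(\Gamma_{gUg^{-1}})^\times$ are identified inside $\Lambda_{\overline{\mathbb{Z}_p}}(\Gamma)^\times$ via possibly different embeddings. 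Once this conjugation-compatibility is in hand, well-definedness, $R(\Gamma)$-invariance, and $G_{\mathbb{Q}_p}$-invariance all follow by the additivity-plus-equivariance bookkeeping sketched above.
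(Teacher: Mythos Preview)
Your overall architecture matches the paper's: check that the value is unchanged under passing to a $\mathcal{G}$-conjugate representative of $a_\mathcal{G}(\rho)$ (using the $\mathcal{G}$-invariance of $(x_U)_U$), establish compatibility with twisting by one-dimensional characters, then extend from characters inflated from $\mathcal{G}/\mathcal{Z}$ to all of $\mathrm{Irr}(\mathcal{G})$ by forcing $R(\Gamma)$-equivariance, and finally read off $G_{\mathbb{Q}_p}$-invariance from that of each $\mathrm{Det}_{U^{\mathrm{ab}}}$. Two points need correction, though.

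First, the sentence ``the product over $U$ picks up the single factor $\psi^{\sum_U n_U}$, which is exactly $\psi$ to the power $\dim\rho$'' is wrong on both counts. The $R(\Gamma)$-action on the target $\Lambda_{\overline{\mathbb{Z}_p}}(\Gamma)^\times$ is by the ring automorphism $Tw_\psi\colon \gamma\mapsto\psi(\gamma)\gamma$, not by multiplication by any scalar power of $\psi$; and in any case $\sum_U n_U$ is not $\dim\rho$ (that would be $\sum_U n_U[\mathcal{G}:U]$). The correct calculation is simply that $Tw_\psi$ is multiplicative, so $\prod_U \mathrm{Det}_{U^{\mathrm{ab}}}(x_U)(\psi|_U\cdot\chi_U)^{n_U}=\prod_U Tw_{\psi|_{\Gamma_U}}\bigl(\mathrm{Det}_{U^{\mathrm{ab}}}(x_U)(\chi_U)\bigr)^{n_U}=Tw_\psi\bigl(\prod_U\mathrm{Det}_{U^{\mathrm{ab}}}(x_U)(\chi_U)^{n_U}\bigr)$, which is exactly the required equivariance.

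Second, and more substantively, you assert that ``explicit Brauer induction commutes with twisting by one-dimensional characters that are inflated from a quotient through which everything factors'' as if this were automatic. It is not: this is the content of a separate lemma in the paper, proved by going back to Snaith's explicit formula $\alpha_{(U,\phi)^G}(\rho)=\frac{|U|}{|G|}\sum \mu_{(U',\phi'),(U'',\phi'')}\langle\phi'',\mathrm{Res}^G_{U''}(\rho)\rangle$ and checking that the map $(U,\phi)\mapsto(U,\mathrm{Res}^G_U(\chi)\phi)$ is an order-automorphism of the poset $\mathcal{M}_G$, hence preserves the M\"obius function, while the Schur products transform correctly. Without this computation the identity $a_\mathcal{G}(\chi\rho)=\chi\cdot a_\mathcal{G}(\rho)$ is unjustified, and your well-definedness and $R(\Gamma)$-invariance arguments both rest on it.
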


\begin{proof}
Assume first that $\rho$ arises by inflation from $\tilde{\rho}\in \mathrm{Irr}(\mathcal{G}/\Z).$
First we check that the defining term $\prod_U \mathrm{Det}_{U^{\mathrm{ab}}}(x_U)(\chi_U)^{n_U}$
of
  $\mathrm{BrInd}((x_U)_U)(\rho)$ is independent of the choice of representatives $(n_U
\chi_U)_{U\in S(\mathcal{G},\mathcal{Z})}$ for $a_\mathcal{G}( {\rho})=a_\mathcal{G}(\tilde{\rho})$
 (using our lax notational convention): For any conjugate $g(n_U\chi_U)_{U\in
S(\mathcal{G},\mathcal{Z})}=(n_{gUg^{-1}}\chi_{gUg^{-1}}(g\cdot g^{-1}))_{U\in
S(\mathcal{G},\mathcal{Z})} $   we have by the $\mathcal{G}$-invariance of $(x_U)_U$
\begin{align*}
&\prod_U \mathrm{Det}_{U^{\mathrm{ab}}}(x_U)(\chi_{gUg^{-1}}(g\cdot g^{-1}))^{n_{gUg^{-1}}}\\
& \quad\quad\quad\quad\quad\quad\quad\quad=\prod_U
\mathrm{Det}_{U^{\mathrm{ab}}}(g^{-1}x_{gUg^{-1}}g)(\chi_{gUg^{-1}}(g\cdot g^{-1}))^{n_{gUg^{-1}}}\\
&\quad\quad\quad\quad\quad\quad\quad\quad=\prod_U\chi_{gUg^{-1}}(g(g^{-1}x_{gUg^{-1}}g)g^{-1})^{n_{gUg^{-1}}}\\
&\quad\quad\quad\quad\quad\quad\quad\quad=\prod_{gUg^{-1}}\chi_{gUg^{-1}}( x_{gUg^{-1}} )^{n_{gUg^{-1}}}\\
&\quad\quad\quad\quad\quad\quad\quad\quad=\prod_U
\mathrm{Det}_{U^{\mathrm{ab}}}(x_U)(\chi_U)^{n_U}.
\end{align*}

Next we show that our (partial) definition sofar is invariant under characters of the kind
$\chi:\mathcal{G}\twoheadrightarrow \Gamma/\Z \to \overline{\mathbb{Z}_p}^\times.$ Indeed,
\begin{align*}
 \chi \cdot\prod_U \mathrm{Det}_{U^{\mathrm{ab}}}(x_U)(\chi_U)^{n_U}
&  =\prod_U \mathrm{Res}^G_U(\chi)\cdot\mathrm{Det}_{U^{\mathrm{ab}}}(x_U)(\chi_U)^{n_U}\\
&  =\prod_U \mathrm{Det}_{U^{\mathrm{ab}}}(x_U)(\mathrm{Res}^G_U(\chi)\chi_U)^{n_U}\\
\end{align*}
by the $R(\Gamma_U)$-invariance of $\mathrm{Det}_{U^{\mathrm{ab}}}(x_U).$ But by Lemma
\ref{twist-invariance}  below $(n_U\mathrm{Res}^G_U(\chi)\chi_U)$ represents
$a_\mathcal{G}(\chi\rho),$ whence we have shown that
\[\chi \cdot\mathrm{BrInd}((x_U)_U)(\rho)=\mathrm{BrInd}((x_U)_U)(\chi\rho)\]
as claimed.

 If $\rho\in
\mathrm{Irr}(\mathcal{G})  $ is arbitrary, using \eqref{surj} we choose some
$\chi':\mathcal{G}\twoheadrightarrow \Gamma \to \overline{\mathbb{Z}_p}^\times$ such that $
\chi'\otimes\rho$ is inflated from $\mathcal{G}/\Z.$ We then define
\[\mathrm{BrInd}((x_U)_U)(\rho):=(\chi')^{-1}\cdot\mathrm{BrInd}((x_U)_U)(\chi'\otimes\rho). \] If
$\chi''$ is a second such character, we conclude that $\chi:=(\chi')^{-1}\chi''$ comes from a
character of $\mathcal{G}/\Z.$ Using the invariance of $\mathrm{BrInd}((x_U)_U) $ with respect to
such characters as shown above we conclude   that $\mathrm{BrInd}((x_U)_U)$ is well-defined on
arbitrary irreducible representations of $\mathcal{G}$ and hence extends to a homomorphism on
$R(\mathcal{G})$ being $R(\Gamma)$-invariant by construction. Finally the
$G_{\mathbb{Q}_p}$-invariance follows from the $G_{\mathbb{Q}_p}$-invariance of all the
$\mathrm{Det}_{U^{\mathrm{ab}}}(x_U)$ and clearly $\mathrm{BrInd}$ is a homomorphism.
\end{proof}

For a finite group $G$ let $R_+(G)$ denote the free abelian group on the $G$-conjugacy classes of
characters $\phi:U\to\overline{\mathbb{Z}_p}^\times,$ where $U$ is any subgroup of $G.$ We shall
write $(U,\phi)^G$ for the $G$-conjugacy class of $(U,\phi).$ We clearly have a natural isomorphism
\[ R_+(G)\cong (\oplus_{U } R(U^{\mathrm{ab}} ))_{G},\]
where $U$ runs through all subgroups of $G$ and usually explicit Brauer induction is defined in
terms of a section $a_G$ of
\[ R_+(G)\twoheadrightarrow R(G),\] i.e.,\
\[a_G(\rho)=\sum_{(U,\phi)^G}\alpha_{(U,\phi)^G}(\rho) (U,\phi)^G\]
for some integers $\alpha_{(U,\phi)^G}(\rho)$. Note that there is a natural action of any character
$\chi:G\to\overline{\mathbb{Z}_p}^\times$ on $R_+(G),$ sending $(U,\phi)^G$ to
$\chi\cdot(U,\phi)^G:=(U, \mathrm{Res}^G_U(\chi)\phi)^G.$

\begin{lemma}\label{twist-invariance}
For every $\rho\in R(G)$ we have $\alpha_{(U,\mathrm{Res}^G_U(\chi)\phi)^G}(\chi\rho)=\alpha_{(U,
\phi)^G}( \rho),$ i.e.\ $a_G(\chi\rho)=\chi\cdot a_G(\rho).$
\end{lemma}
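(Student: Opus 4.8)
The plan is to reduce the statement to the behaviour of explicit Brauer induction under twisting by a one-dimensional character. Recall that $a_G$ is a \emph{section} of the canonical surjection $R_+(G)\twoheadrightarrow R(G)$, so it is not itself a ring map; the assertion $a_G(\chi\rho)=\chi\cdot a_G(\rho)$ for $\chi$ a character of $G$ is a genuine compatibility that has to be checked against whichever construction of $a_G$ one uses (Boltje's canonical induction formula \cite{boltje} or Snaith's explicit Brauer induction \cite{snaith}). First I would fix the construction: using Boltje's canonical induction formula, $a_G$ is characterized by an adjunction/naturality property together with a normalization on linear characters, and in particular it is natural with respect to the restriction and induction maps between $R_+$ and $R$ for subgroups of $G$. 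The action of $\chi$ on $R_+(G)$ sending $(U,\phi)^G$ to $(U,\mathrm{Res}^G_U(\chi)\phi)^G$ is precisely the module structure on $R_+(G)$ over $R(G)$ that makes the surjection $R_+(G)\twoheadrightarrow R(G)$ into a map of $R(G)$-modules: indeed $\mathrm{Ind}_U^G(\mathrm{Res}^G_U(\chi)\cdot\phi)=\chi\cdot\mathrm{Ind}_U^G(\phi)$ by the projection formula. So the equality $a_G(\chi\rho)=\chi\cdot a_G(\rho)$ is the statement that $a_G$ is a morphism of $R(G)$-modules, at least along the one-dimensional part of $R(G)$.

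The key steps, in order, are: (1) record that both $R_+(G)$ and $R(G)$ are modules over the representation ring $R(G)$ (or just over its group of linear characters), with the action on $R_+(G)$ as described and with the surjection $b_G\colon R_+(G)\to R(G)$ being $R(G)$-linear via the projection formula; (2) recall from \cite{boltje} (or \cite{snaith}) the characterizing property of the canonical section $a_G$ — it is the unique natural family of sections satisfying the normalization $a_G(\phi)=(G,\phi)^G$ on linear characters $\phi$ of $G$ and compatible with restriction to subgroups; (3) observe that $\rho\mapsto\chi^{-1}\cdot a_G(\chi\rho)$ is again a natural section of $b_G$ with the same normalization (on a linear character $\psi$ it gives $\chi^{-1}\cdot a_G(\chi\psi)=\chi^{-1}\cdot(G,\chi\psi)^G=(G,\psi)^G$), and that it commutes with restriction because the $\chi$-twist does; (4) conclude by uniqueness that $\chi^{-1}\cdot a_G(\chi\rho)=a_G(\rho)$ for all $\rho$, which upon comparing coefficients of $(U,\phi)^G$ is exactly $\alpha_{(U,\mathrm{Res}^G_U(\chi)\phi)^G}(\chi\rho)=\alpha_{(U,\phi)^G}(\rho)$.

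Alternatively, if one does not wish to invoke the abstract characterization, one can argue directly from an explicit formula for $a_G$. Snaith's and Boltje's formulas express $\alpha_{(U,\phi)^G}(\rho)$ through an inclusion–exclusion (a M\"obius-type sum over the poset of subgroups) involving the multiplicities $\langle\mathrm{Res}^G_U\rho,\phi\rangle$ of $\phi$ in $\mathrm{Res}^G_U\rho$. Twisting $\rho$ by $\chi$ replaces $\mathrm{Res}^G_U\rho$ by $\mathrm{Res}^G_U(\chi)\otimes\mathrm{Res}^G_U\rho$, hence shifts $\langle\mathrm{Res}^G_U\rho,\phi\rangle$ to $\langle\mathrm{Res}^G_U(\chi\rho),\mathrm{Res}^G_U(\chi)\phi\rangle$; since the combinatorial poset data is untouched, every term of the defining sum for $\alpha_{(U,\mathrm{Res}^G_U(\chi)\phi)^G}(\chi\rho)$ matches the corresponding term for $\alpha_{(U,\phi)^G}(\rho)$, and the claim follows term by term. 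The main obstacle is purely one of bookkeeping: making sure the $\chi$-action is compatible with passing between the sum over \emph{all} subgroups $U$ and the sum over $G$-conjugacy classes $(U,\phi)^G$ — i.e.\ that $\chi$-twisting is $G$-equivariant, which is immediate because $\mathrm{Res}^G_U(\chi)$ is a class function and restriction commutes with conjugation — and, depending on the chosen reference, checking that the particular normalization conventions for $a_G$ in \cite{boltje} or \cite{snaith} indeed give the projection-formula compatibility on the nose rather than up to some correction.
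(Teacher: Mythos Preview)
Your second approach --- direct verification from the explicit formula --- is exactly what the paper does: it quotes Snaith's formula \cite[thm.\ 2.3.15]{snaith} for $\alpha_{(U,\phi)^G}(\rho)$, observes that $(U,\phi)\mapsto(U,\mathrm{Res}^G_U(\chi)\phi)$ is a $G$-equivariant automorphism of the poset $\mathcal{M}_G$ (so the M\"{o}bius values are preserved), notes that $\langle\mathrm{Res}^G_{U''}(\chi)\phi'',\mathrm{Res}^G_{U''}(\chi\rho)\rangle=\langle\phi'',\mathrm{Res}^G_{U''}(\rho)\rangle$, and concludes by a term-by-term match. Your bookkeeping remarks about $G$-equivariance of the twist and passage to conjugacy classes are precisely the points the paper checks.

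Your first approach, via the uniqueness characterization of Boltje's canonical section, is a genuinely different and more conceptual route that the paper does not take. It trades the explicit combinatorics for a single invocation of uniqueness, which is cleaner but requires one to recall (and trust) the exact axiomatic pinning-down of $a_G$. One small caveat: Boltje's uniqueness is for the whole family $(a_H)_{H\leq G}$ compatible with restriction, so to run the argument you must twist simultaneously at every subgroup by $\chi|_H$ and verify that the restriction maps on $R_+$ intertwine with $\chi$-twisting; you flag this, and it is indeed immediate. Either approach is fine; the paper's has the advantage of being self-contained once the explicit formula is quoted, while yours (Approach 1) explains \emph{why} the compatibility should hold without any computation.
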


\begin{proof}
According to \cite[thm.\ 2.3.15]{snaith} the explicit formula for $a_G$ is given by
\[\alpha_{(U,\phi)^G}(\rho)=\frac{|U|}{|G|}\sum_{\substack{(U',\phi')\in (U,\phi)^G,\\  (U',\phi')\subseteq(U'',\phi'')}} \mu_{(U',\phi'),(U'',\phi'')}<\phi'',\mathrm{Res}^G_{U''}(\rho)>,\]
where $\mu$ denotes the M\"{o}bius function on the partially ordered set $\mathcal{M}_G$ consisting of
the characters on subgroups $(U,\phi)$ of $G$ with the $(U,\phi)\subseteq (U',\phi')$ if and only
if $U\subseteq U'$ and $\mathrm{Res}^{U'}_{U}(\phi')=\phi$ while $<\; ,\;>$ denotes the usual Schur
inner product (loc.\ cit., def.\ 1.2.7). Since sending $(U,\phi)$ to
$(U,\mathrm{Res}^G_U(\chi)\phi)$ induces an $G$-equivariant isomorphism $\chi:\mathcal{M}_G\to
\mathcal{M}_G,$ of partially ordered sets, we obtain that
\[\mu_{\chi(U',\phi'),\chi(U'',\phi'')}=\mu_{(U',\phi'),(U'',\phi'')}.\] Taking into account that $<\mathrm{Res}^G_U(\chi)\phi'',\mathrm{Res}^G_{U''}(\chi\rho)>=< \phi'',\mathrm{Res}^G_{U''}( \rho)>$ it follows that
\[\alpha_{(U,\mathrm{Res}^G_U(\chi)\phi)^G}(\rho)=\frac{|U|}{|G|}\sum_{\substack{(U',\phi')\in (U,\phi)^G,\\  (U',\phi')\subseteq(U'',\phi'')}} \mu_{\chi(U',\phi'),\chi(U'',\phi'')}<\mathrm{Res}^G_U(\chi)\phi'',\mathrm{Res}^G_{U''}(\chi\rho)>,\]
equals $\alpha_{(U,\phi)^G}(\rho).$
\end{proof}

By construction, the definition of $\theta$ and the functorial behaviour of $\mathrm{Det}$ with
respect to norm maps and induction (\cite[Lem.\ 9]{RWII}):
\[\mathrm{Det}_\mathcal{G}(\lambda)(\mathrm{Ind}^\mathcal{G}_U\chi)
=\mathrm{Det}_U(N^\mathcal{G}_U(\lambda))( \chi)=
\mathrm{Det}_{U^{\mathrm{ab}}}(\theta_U(\lambda))( \chi)\]we have thus a commutative diagram
\[\xymatrix{
  K_1(\Lambda(\mathcal{G}) \ar[r]^{\mathrm{Det}_\mathcal{G}} \ar[d]_{\theta}
                    & {\mathrm{Hom}_{G_{\mathbb{Q}_p}, \, R(\Gamma)}} (R(\mathcal{G}), \Lambda_{\overline{\mathbb{Z}_p}}(\Gamma)^{\times})     \\
                 {\left(\prod_{U\in S(\mathcal{G},\mathcal{Z})}  \Lambda(U^{\mathrm{ab}})^\times \right)^\mathcal{G}} \ar[r]_(0.35){\prod
                \mathrm{Det}_{U^{\mathrm{ab}}}}
                \ar[ur]^{\mathrm{BrInd}} & {\left(\prod_{U\in S(\mathcal{G},\mathcal{Z})}\mathrm{Hom}_{G_{\mathbb{Q}_p}, \, R(\Gamma_U)}(R(U^{\mathrm{ab}}),
                \Lambda_{\overline{\mathbb{Z}_p}}(\Gamma_U)^{\times})\right)^\mathcal{G}}\ar[u]_{\mathrm{brInd}},
                }\]
where brInd is defined in an analogous way as BrInd.

While all of Kakde's congruences are among the tuples $(x_U)_U\in\prod_{U\in
S(\mathcal{G},\mathcal{Z})}  \Lambda(U^{\mathrm{ab}})^\times$ in the work of Ritter and Weiss some
are  in a similar product (as speculated below, but Ritter and Weiss do not formalise this) and
others are expressed within their Hom-description (over all with the aim to construct an integral
logarithm).

Now we are going to describe the setting of Ritter and Weiss' approach in more detail. First we set
$  Q_{\overline{\mathbb{Q}_p}} (\Gamma) := \mathrm{Quot} (\Lambda_{\overline{\mathbb{Z}_p}}
[[\Gamma]])$ and note that the homomorphism
   \[\mathrm{Det} ( \,\,\, ) (\chi) : K_1 (\Lambda(\mathcal{G})_{S^*}) \longrightarrow
   Q_L(\Gamma)^{\times}\subseteq Q_{\overline{\mathbb{Q}_p}} (\Gamma)\]
in [RW2, {\S}3] defined using the reduced norm
   $K_1(\Lambda(\mathcal{G})_{S^*} )\stackrel{nr}{\longrightarrow}
   Z(\Lambda(\mathcal{G})_{S^*})^{\times} $\\
   coincides with $\Phi_{\chi}$ of \cite{cfksv}   by \cite[lem.\ 3.1]{burns-MC}   and defines a homomorphism
  $$
   K_1 (\Lambda(\mathcal{G})_{S^*}) \stackrel{\mathrm{Det}}{\longrightarrow}
   {\mathrm{Hom}}_{G_{{\mathbb{Q}}_p}, R(\Gamma)} (R(\mathcal{G}), Q_{\overline{\mathbb{Q}_p}}{(\Gamma)}^{\times})
   $$
   into the group of $G_{{\mathbb{Q}}_p} $- and $R(\Gamma)$-invariant
   homomorphism, where:
   \begin{enumerate}
   \item[a)]
   $\sigma \in G_{\mathbb{Q}_p}$ acts coefficientwise on $Q_{\overline{\mathbb{Q}_p}}(\Gamma)$
   and $R(\mathcal{G})$,
   \item[b)]
   $\varphi \in R(\Gamma)$ acts on $R({\mathcal{G}})$
   via the tensor product $\chi \longmapsto \mathrm{infl}_\Gamma^\mathcal{G}(\varphi) \chi$ and on
   $Q_{\overline{\mathbb{Q}_p}}(\Gamma)$ via the homomorphism induced by twisting
   $$
   \begin{array}{rl}
   Tw_{\varphi}: \Lambda_{\overline{\mathbb{Z}_p}}(\Gamma) & \longrightarrow
   \Lambda_{\overline{\mathbb{Z}_p}}(\Gamma)\\
   \gamma & \longmapsto \varphi(\gamma)\gamma.
   \end{array}
   $$
      \end{enumerate}

In particular, from RW's construction { (see   Rem.\  E  in [RW2,
   {\S}4])}:
   $${\rm{ker}} {\rm{Det}}  = {\rm{ker}}{\rm{(nr}}_{\Lambda(\mathcal{G})_{S^*}}) = {\rm{SK}}_1
   (\Lambda(\mathcal{G})_{S^*}) .$$

\bn
   By \cite[Lem.\ 9]{RWII} Det behaves functorially with respect to open
   subgroups $U \subseteq {\mathcal{G}}$ and the map included by
   Induction $({\rm{Ind}}_{\mathcal{G}}^U)^*$, as well as to factor
   groups ${\mathcal{G}} \twoheadrightarrow
   {\overline{\mathcal{G}}}$ and the map induced by inflation
   $({\rm{infl}} \frac{\mathcal{G}}{\mathcal{G}})^*$ on the
   Hom-description.\\

   \section{The interpolation property versus ${L_{F_{\infty}/F}}$}

   By \[L_{F_{\infty}/F} (\chi) : = {\mathcal{L}}_{\chi, \Sigma} =
   \frac{G_{\chi, \Sigma} (\gamma - 1)}{H_{\chi} (\gamma - 1)}
   \in Q_{\overline{\mathbb{Q}_p}}(\Gamma)^{\times} \]
   we denote the $(\Sigma$-truncated) $p$-adic Artin $L$-function for
   $F^{cyc}/F$ attached to $\chi \in R({\mathcal{G}})$, for $\gamma$ a fixed
   topological generator of $\Gamma$ [RW2, {\S}4].
   Then $L_{F_{\infty}/F} $ belongs to $ {\rm{Hom}}_{G_{Q_p}, R(\Gamma)}
   (R({\mathcal{G}}), Q_{\overline{\mathbb{Q}_p}} (\Gamma)^{\times})$ and is independent of choice of
   $\gamma$ (loc.\ cit.\ prop.\ 11).

   Applying the extended localised augmentation map $\varphi': Q_{\overline{\mathbb{Q}_p}} (\Gamma)^{\times} \longrightarrow {\overline{\mathbb{Q}_p}} \cup
   \{\infty\},$ which is induced by sending any $\gamma\in \Gamma$ to $1,$ (see before \cite[thm.\ 2.4]{suja-workshopms}) one sees that giving $L_{F_{\infty}/F}$ is morally the same as
   requiring the usual interpolation property:
   $$
   {\rm{Det}}(\zeta) = L_{F_{\infty}/F}
   $$
   for some $\zeta \in K_1 (\Lambda ({\mathcal{G}})_{S^*})$
   corresponds to
   $$
   \zeta(\chi) = \varphi' \left({\rm{Det}} (\zeta)(\chi)\right) = \varphi'
   (L_{F_{\infty}/F} (\chi)) = L_{\Sigma} (\chi, 1)
   $$
   Extending this interpolation property also to the cyclotomic
   character $\kappa$, it even determines $L_{F_{\infty}/F}$
   uniquely.    In fact,  Ritter and Weiss show that
   $$
   L_{F_{\infty}/F} \in {\rm{Hom}}_{G_{\mathcal{O}_p},R(\Gamma)}^{(1)}
   (R({\mathcal{G}}), B_{\overline{\mathbb{Z}_p}}(\Gamma)^{\times}),
   $$
     where $^{(1)}$ indicates that $f$ satisfies the congruence
  \begin{equation}
  \label{snaith-cong} \frac{f(\chi)^p}{\Psi f (\psi_p \chi)} \equiv 1 \,\,\,
  {\rm{mod}} \,\,\, p
  \end{equation}  with the ring
   endomorphism
 $
  \Psi: B_{\overline{\mathbb{Z}_p}}(\Gamma) \rightarrow
   B_{\overline{\mathbb{Z}_p}}(\Gamma)
 $
being induced by sending
   $\gamma$ to $\gamma^p$, while
   $\psi_p: R({\mathcal{G}}) \rightarrow R({\mathcal{G}})$
   denotes the $p^{th}$ Adams operator, i.e.,
   \[(\psi_p \chi)(g) = \chi(g^p)\]
   for any character $\chi$.
This result uses twice explicit Brauer induction: Firstly to generalise a theorem of Snaith
\cite[thm.\ 4.1.6]{snaith} saying  that the image of $\mathrm{Det}$ lies in
${\rm{Hom}}_{G_{\mathcal{O}_p},R(\Gamma)}^{(1)}
   (R({\mathcal{G}}), B_{\overline{\mathbb{Z}_p}} (\Gamma)^{\times}).$ Due to the existence
   of Serre's pseudomeasures which can be interpreted as elements in $K_1(\Lambda(\mathcal{G})_S)$ for
   $\mathcal{G}$ abelian, it follows that $L_{F_{\infty}/F}$ satisfies the generalised Snaith
   congruences. But secondly by explicit Brauer induction the values of $L_{F_{\infty}/F}$ in the general
   case can be expressed by the values of suitable abelian $L$'s, hence implying the congruences. In
   both cases it is crucial that the Brauer induction can be arranged in a compatible way with respect to
   the $p^{th}$  Adams operator, which in general does not behave well under induction.


\bn
   \begin{theorem}[{[RW3]} Thm.\ $\rm{B}_{\wedge}$ in {\S}6] For $\mathcal{G}$
   pro-$p$ we have
   $$
   {\rm{Det}} \,\, K_1 (B({\mathcal{G}})) \cap {\rm{Hom}}_{G_{\mathbb{Q}_p},
   \; R(\Gamma)} (R({\mathcal{G}}), \Lambda_{{\overline{\mathbb{Z}_p}}} (\Gamma)^{\times}) \subset
   {\rm{Det}} K_1 (\Lambda({\mathcal{G}})).
   $$
   in $   \mathrm{ Hom}_{G_{\mathbb{Q}_p}, \; R(\Gamma)} (R({\mathcal{G}})), B_{{\overline{\mathbb{Z}_p}}}
   (\Gamma)^{\times}).
   $
\end{theorem}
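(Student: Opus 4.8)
The plan is to use the integral $p$-adic logarithms $L$ on $K_1(\Lambda(\mathcal{G}))$ and $L_B$ on $K_1(B(\mathcal{G}))$ (the $\mathbb{L}$ of Ritter and Weiss, cf.\ the glossary) in order to reduce the comparison to the much more transparent one between the conjugacy-class module $\Lambda_{\mathcal{O}}(\mathcal{G})^{\mathrm{ab}}$ and its $S$-localised $p$-completion $B_{\mathcal{O}}(\mathcal{G})^{\mathrm{ab}}$, where the assumption that the values of $\mathrm{Det}$ lie in $\Lambda_{\overline{\mathbb{Z}_p}}(\Gamma)^{\times}$ (and not only in $B_{\overline{\mathbb{Z}_p}}(\Gamma)^{\times}$) can be exploited directly.

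First I would fix $\xi\in K_1(B(\mathcal{G}))$ with $f:=\mathrm{Det}(\xi)$ having values in $\Lambda_{\overline{\mathbb{Z}_p}}(\Gamma)^{\times}$, and use the commutative square
\[\xymatrix{
K_1(\Lambda(\mathcal{G}))\ar[r]^{L}\ar[d] & \Lambda_{\mathcal{O}}(\mathcal{G})^{\mathrm{ab}}\ar[d]\\
K_1(B(\mathcal{G}))\ar[r]^{L_B} & B_{\mathcal{O}}(\mathcal{G})^{\mathrm{ab}}
}\]
together with the injective character-evaluation map $t$ from $\Lambda_{\mathcal{O}}(\mathcal{G})^{\mathrm{ab}}$ (resp.\ $B_{\mathcal{O}}(\mathcal{G})^{\mathrm{ab}}$) into the $\Lambda_{\overline{\mathbb{Z}_p}}(\Gamma)$- (resp.\ $B_{\overline{\mathbb{Z}_p}}(\Gamma)$-) valued Hom-groups, for which $\ell\circ\mathrm{Det}=t\circ L$ on $K_1(\Lambda(\mathcal{G}))$ and $\ell\circ\mathrm{Det}=t\circ L_B$ on $K_1(B(\mathcal{G}))$, with the ``integral logarithm on the Hom side'' $\ell(g)(\chi):=\tfrac1p\log\bigl(g(\chi)^p/\Psi(g(\psi_p\chi))\bigr)$ entering \eqref{snaith-cong}. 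Since $f$ takes values in the units of $\Lambda_{\overline{\mathbb{Z}_p}}(\Gamma)$, the standard fact that $\ell$ maps $\Lambda_{\overline{\mathbb{Z}_p}}(\Gamma)^{\times}$ into $\Lambda_{\overline{\mathbb{Z}_p}}(\Gamma)$ shows that $\ell(f)=t(L_B(\xi))$ is $\Lambda_{\overline{\mathbb{Z}_p}}(\Gamma)$-valued. The crucial point is then that $t$ reflects integrality: the image of $\Lambda_{\mathcal{O}}(\mathcal{G})^{\mathrm{ab}}$ coincides with the set of $\Lambda_{\overline{\mathbb{Z}_p}}(\Gamma)$-valued homomorphisms lying in the image of its $B$-analogue. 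Granting this, $L_B(\xi)$ lies in the image of $\Lambda_{\mathcal{O}}(\mathcal{G})^{\mathrm{ab}}\to B_{\mathcal{O}}(\mathcal{G})^{\mathrm{ab}}$; write $L_B(\xi)$ as the image of $a\in\Lambda_{\mathcal{O}}(\mathcal{G})^{\mathrm{ab}}$.

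Next I would descend $a$ itself. By the Oliver--Taylor type description of the image of $L$ inside $\Lambda_{\mathcal{O}}(\mathcal{G})^{\mathrm{ab}}$ by congruences (which involve only transfer and Frobenius maps and therefore commute with $S$-localisation and with $p$-completion), together with its evident analogue for the image of $L_B$ inside $B_{\mathcal{O}}(\mathcal{G})^{\mathrm{ab}}$, the element $a$, whose image in $B_{\mathcal{O}}(\mathcal{G})^{\mathrm{ab}}$ lies in the image of $L_B$ by construction, must already lie in the image of $L$; pick $\eta\in K_1(\Lambda(\mathcal{G}))$ with $L(\eta)=a$. Then the images of $\xi$ and of $\eta$ in $K_1(B(\mathcal{G}))$ have equal logarithm, so $\xi\eta^{-1}\in\ker(L_B)$. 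For $\mathcal{G}$ pro-$p$ and $p\neq 2$ the classes in $\ker(L_B)$ are, modulo $SK_1(B(\mathcal{G}))$, represented by products of a root of unity in $\mathcal{O}$ with the image of a group element of $\mathcal{G}$, all of which already come from $K_1(\Lambda(\mathcal{G}))$, whereas $SK_1(B(\mathcal{G}))=\ker(\mathrm{Det})$ is invisible to $\mathrm{Det}$. Hence $\mathrm{Det}(\xi\eta^{-1})\in\mathrm{Det}\,K_1(\Lambda(\mathcal{G}))$, and therefore $\mathrm{Det}(\xi)=\mathrm{Det}(\xi\eta^{-1})\cdot\mathrm{Det}(\eta)$ does too, as claimed.

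The main obstacle is the integrality-reflection statement for $t$ used in the second paragraph, together with the matching of the congruence descriptions of the images of $L$ and $L_B$ across the $p$-completion: one must show that passing from $\Lambda_{\mathcal{O}}(\mathcal{G})^{\mathrm{ab}}$ to its localised completion creates no new elements whose character values are $\Lambda_{\overline{\mathbb{Z}_p}}(\Gamma)$-integral, and that the Frobenius and Adams operators occurring in the trace congruences, which (as recalled before the statement) do not behave well under induction, can nevertheless be controlled compatibly on both sides. By contrast, the bookkeeping in the last step (the shape of $\ker L_B$ and the identity $SK_1=\ker\mathrm{Det}$) is standard, resting on Oliver's computations for the algebras involved.
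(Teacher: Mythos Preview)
The paper does not supply its own proof of this statement; it is quoted from \cite{RWIII} (as Theorem $B_\wedge$ there) and then only compared with Kakde's analogue $\Phi_B \cap \prod_U \Lambda(U^{\mathrm{ab}})^\times = \Phi$ and with Burns' logarithm-free approach \cite[thm.~6.1]{burns-MC}. So there is no in-paper argument to match yours against directly.

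That said, your outline is in the spirit of the Ritter--Weiss method and of the paper's own later arguments (compare the commutative square displayed just after the theorem and the proof of Theorem~\ref{reductionlog}): reduce via $\mathbb{L}$ and the trace isomorphism $\mathrm{Tr}$ to an additive integrality statement on $T\Lambda(\mathcal{G})$ versus $TB(\mathcal{G})$, then handle the kernel separately. Two points deserve care. First, what you call ``standard'' for $\ker(L_B)$ is in fact recorded near the end of the paper as a \emph{recent} result of Kakde \cite{kakde-completion} (building on \cite{RWVI} for abelian $\mathcal{G}$), postdating \cite{RWIII}; so either the original proof of $B_\wedge$ makes do with less---something closer to Higman's theorem, as in the Claim inside the proof of Theorem~\ref{reductionlog}---or you are importing later machinery and should say so. Second, for the step ``$a$ lies in the image of $L$'' you do not need a full congruence description of $\mathrm{im}(L)$: it suffices that the cokernels of $L$ and $L_B$ are both $\mathcal{G}^{\mathrm{ab}}$ via compatible maps $\omega$, as in \eqref{5termdiagram} and its $B$-analogue, which is more elementary than the Oliver--Taylor congruence package you invoke. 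The substantive obstacle is exactly the one you flag: the integrality-reflection for $\mathrm{Tr}$, i.e.\ that an element of $TB(\mathcal{G})$ whose character values all lie in $\Lambda_{\overline{\mathbb{Z}_p}}(\Gamma)$ already comes from $T\Lambda(\mathcal{G})$. That is where the real content of \cite{RWIII} sits, and your proposal correctly isolates it without resolving it.
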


   This should be compared to
   $$
    {\Phi_B} \cap \prod_{U \in S ({\mathcal{G}}, {\mathcal{Z}})}
   \Lambda (U^{\mathrm{ab}})^{\times} = \Phi
   $$
   in Kakde's work and to  Burns' result \cite[thm.~6.1, rem.~6.2]{burns-MC} - the latter approach avoids the analysis of integral
logarithms.
   Thus, by the usual argument (compare ????reference into some article of the same volume to be inserted later) the Main Conjecture is equivalent to showing that
   \begin{equation}
   \label{imageDet}L_{F_{\infty}/F} \in {\rm{Det}} \,\, {K_1} (B({\mathcal{G}})).
   \end{equation}

   In order to verify the latter condition, Ritter and Weiss introduce a new integral group
   logarithm, which makes it possible to translate this multiplicative statement in a additive
   statement plus a statement about the kernel of the integral group logarithm which is referred to
   as the ``torsion'' part (since for a finite $p$-group $G$ this kernel is actually a torsion
   group) even though in this setting it may contain a torsionfree part!

   Set $TB: = B/[B, B]$ for any ring $B$, where $[B, B]$ denotes the set
   of additive commutators $ab-ba, a,b \in B$ and consider the diagram
$$
\xymatrix{ K_1(B({\mathcal{G}}))\ar[r]^{\mathbb{L}}\ar[d]_{\rm{Det}}
&TB({\mathcal{G}})\subset TB({\mathcal{G}})[\frac{1}{p}]\ar[d]^{\rm{Tr}}_{\cong}\\
{\mathrm{Hom}_{G_{{\mathbb{Q}}_p}, R(\Gamma)}^{(1)} (R({\mathcal{G}}),B_{{\overline{\mathbb{Z}_p}}}
(\Gamma)^{\times})\ar[r]^{ {\text{\bf{L}}}}}
     &{\mathrm{Hom}_{G_{{\mathbb{Q}_p}}, {R(\Gamma)}}({R}({\mathcal{G}}),
     B_{{\overline{\mathbb{Z}_p}}}(\Gamma)[\frac{1}{p}] )
     }}
$$
$$
    f \mapsto {\text{\bf{L}}}(f)(\chi) = \frac{1}{p} \log
    (\frac{f(\chi)^p}{\Psi(f(\psi_p \chi))})
$$
in which   ${\text{\bf{L}}f}$ is well-defined due to the congruence \eqref{snaith-cong} (note that
due the Galois invariance, for any $f$ in   $\mathrm{Hom}_{G_{{\mathbb{Q}}_p}, R(\Gamma)}
(R({\mathcal{G}}),B_{{\overline{\mathbb{Z}_p}}} (\Gamma)^{\times})$ the value $f(\rho)$ belongs to some
$B_{{\mathcal{O}_L}} (\Gamma)^{\times}$ for some finite extension $L$ of $\mathbb{Q}_p)$ and which defines
the integral logarithm $\mathbb{L}=\mathbb{L}_{\mathcal{G}}$ in the upper row following the
approach of Snaith in \cite{snaith}. The trace map Tr is the analogue of Det in the additive
setting, see \cite{RWIII}.

{\bf Question:} Does $\mathbb{L}$ coincide with Kakde's integral logarithm $L_B$ in \cite[\S
5]{sch-venkakde}?

As mentioned above now Ritter and Weiss  divide the condition \eqref{imageDet} into the question
whether the ``additive'' element $\mathbf{L}(L_{F_{\infty/F}})$ lies in
$\mathrm{Tr}(\mathrm{im}(\mathbb{L}))$ and under which conditions a (torsion) element in
$\ker(\mathbf{L}),$ viz the defect of $L_{F_{\infty/F}}$ not being determined by
$\mathbf{L}(L_{F_{\infty/F}}), $ is in the image of $\mathrm{Det}$? To this aim they introduce
   \[t_{F_{\infty/F}}:= Tr^{-1} {\text{\bf{L}}}(L_{F_{\infty/F}}) \in T
   B({\mathcal{G}})[\frac{1}{p}],\]
  and call it the {\it{logarithmic pseudomeasure}}.

\begin{theorem}\label{reductionlog}
$L_{F_\infty/F} \in \mathrm{Det }
    K_1(B(\mathcal{G}))$ if and only if
   \begin{enumerate}
    \item[{(i )}] $t_{F_\infty/F}\in T B(\mathcal{G})$ (integrality) and
    \item[{(ii)}] $\mathrm{ver}_U^V\ \zeta_{F_\infty^{[V,V]}/F_\infty^V}\equiv
    \zeta_{F_\infty/F_\infty^U}\;\mathrm{ mod }\; \mathrm{im}(\sigma_U^V)$ (torsion congruence)
   \end{enumerate}
   for all $U\subseteq V\subseteq\mathcal{G}$ with $U$ abelian, $[V:U]=p$
   where $\mathrm{ver}_U^V$ is induced from $\mathrm{ver}_U^V:V^{\mathrm{ab}}\to U^{\mathrm{ab}}$
   by linear extension.

\begin{remark}
Ritter and Weiss' strategy of decomposing the problem into an additive/logarithmic and torsion part
by various diagram chases should be compared with the use of the $5$-lemma in Kakde's approach with
respect to the following diagram  in \cite[between (8) and lem.\ 4.7]{sch-venkakde}
\begin{equation}\label{5termdiagram}
    \xymatrix{
       1  \ar[r] & \mu(\mathcal{O}) \times \G^{\mathrm{ab}} \ar[d]_{=} \ar[r]^-{\iota} & K'_1(\Lambda(\G)) \ar[d]_{\theta} \ar[r]^-{L} & \mathcal{O}[[\mathrm{Conj}(\G)]] \ar[d]_{\beta}^{\cong} \ar[r]^-{\omega} & \G^{\mathrm{ab}} \ar[d]_{=} \ar[r] & 1  \\
     1 \ar[r] & \mu(\mathcal{O}) \times \G^{\mathrm{ab}} \ar[r]^-{\theta\circ \iota} & \Phi \ar[r]^-{\mathcal{L}} & \Psi \ar[r]^-{\omega\circ\beta^{-1}} & \G^{\mathrm{ab}} \ar[r] & 1 .  }
\end{equation}
combined with diagram \cite[(11)]{sch-venkakde} for the $B$-situation.

\end{remark}
\end{theorem}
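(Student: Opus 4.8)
The plan is to reduce the biconditional to a diagram chase in the commutative square relating $\mathrm{Det}$, $\mathbb{L}$, $\mathbf{L}$ and $\mathrm{Tr}$ displayed just before the statement, using the fact (recalled above) that the integral logarithm $\mathbb{L}_{\mathcal{G}}$ fits into an exact sequence with kernel the ``torsion'' subgroup of $K_1(B(\mathcal{G}))$ and cokernel controlled by $TB(\mathcal{G})$ modulo the image of the (additive) integral logarithm of the center or, more precisely, modulo the additive analogue of the relations. So the first step is to make precise the snake-type exact sequence
\[
1 \to \ker(\mathbb{L}) \to K_1(B(\mathcal{G})) \xrightarrow{\mathbb{L}} TB(\mathcal{G}) \to \mathrm{coker}(\mathbb{L}) \to 1,
\]
and to note that $\mathrm{Det}$ is injective modulo $SK_1$, so that $L_{F_\infty/F}\in \mathrm{Det}\,K_1(B(\mathcal{G}))$ is equivalent to asking that $L_{F_\infty/F}$, which we already know lies in $\mathrm{Hom}^{(1)}_{G_{\mathbb{Q}_p},R(\Gamma)}(R(\mathcal{G}),B_{\overline{\mathbb{Z}_p}}(\Gamma)^\times)$ by the Snaith-type result quoted above, is in the image of the lower horizontal row composed appropriately. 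Since $\mathrm{Tr}$ is an isomorphism and $\mathbf{L}(L_{F_\infty/F}) = \mathrm{Tr}(t_{F_\infty/F})$ by definition of the logarithmic pseudomeasure, the element $L_{F_\infty/F}$ lifts to $K_1(B(\mathcal{G}))$ iff (a) $t_{F_\infty/F}$ lies in the subgroup $\mathrm{Tr}^{-1}\mathbf{L}(\mathrm{Hom}^{(1)}\cdots) \cap \mathrm{im}(\mathbb{L})$ — which, after identifying $\mathrm{im}(\mathbb{L})$ inside $TB(\mathcal{G})[\frac1p]$, is exactly condition (i) that $t_{F_\infty/F}\in TB(\mathcal{G})$ together with the statement that it lies in the image of $\mathbb{L}$ — and (b) the resulting ambiguity, an element of $\ker(\mathbf{L})$ measuring the failure of $L_{F_\infty/F}$ to be pinned down by $\mathbf{L}(L_{F_\infty/F})$, can be corrected by an element of $\ker(\mathbb{L})$, i.e. lies in $\mathrm{Det}(\ker\mathbb{L})$.

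The second step is to identify each of these two obstructions with the two conditions in the statement. For (i): one checks that for $t\in TB(\mathcal{G})$, integrality is automatically enough to guarantee $t\in \mathrm{im}(\mathbb{L})$ once one knows the cokernel of $\mathbb{L}$ on the integral level is detected precisely by the torsion congruence; this is where the Möbius–Wall / torsion congruence of condition (ii) enters — it is the obstruction living in $\mathrm{coker}(\mathbb{L})$. Concretely, the image of $\mathbb{L}$ inside $TB(\mathcal{G})$ is cut out by congruences indexed by pairs $U\subseteq V\subseteq\mathcal{G}$ with $U$ abelian and $[V:U]=p$, and translating the abelian pseudomeasures $\zeta_{F_\infty/F_\infty^U}$ through $\mathbf{L}$ and $\mathrm{ver}$ turns the additive membership statement into the multiplicative congruence
\[
\mathrm{ver}_U^V\,\zeta_{F_\infty^{[V,V]}/F_\infty^V}\equiv \zeta_{F_\infty/F_\infty^U}\ \mathrm{mod}\ \mathrm{im}(\sigma_U^V).
\]
So the task here is to match the indexing set of the congruences defining $\mathrm{im}(\mathbb{L})$ with the family $(U,V)$ in the statement, and to verify that on abelian subquotients $\mathbf{L}$ recovers the classical Iwasawa logarithm so that the known properties of Serre pseudomeasures give precisely this congruence.

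The third step handles the ``torsion'' discrepancy (b): one shows that any two lifts of $L_{F_\infty/F}$ differ by an element of $\ker(\mathbf{L})$, and that modulo $\mathrm{Det}(\ker\mathbb{L})$ this difference is again governed by the same family of torsion congruences — so no \emph{extra} condition is needed beyond (i) and (ii). This is the delicate point and I expect it to be the main obstacle: in the present non-compact ($\Gamma$ infinite) setting $\ker(\mathbb{L})$ is not simply $\mu(\mathcal{O})\times\mathcal{G}^{\mathrm{ab}}$ as in the finite $p$-group case but may carry a torsionfree part, so one must argue that the relevant quotient $\ker(\mathbf{L})/\mathrm{Det}(\ker\mathbb{L})$ injects into (a product of) $\mathcal{G}^{\mathrm{ab}}$-type groups and that the torsion congruence already forces the class of $L_{F_\infty/F}$ there to vanish. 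The cleanest way is to run exactly the five-lemma argument on the two four-term exact sequences \eqref{5termdiagram} (and its $B$-analogue): given (i) and (ii) one gets a lift into $\Phi$, then the vertical isomorphisms $\mu(\mathcal{O})\times\mathcal{G}^{\mathrm{ab}}=\mu(\mathcal{O})\times\mathcal{G}^{\mathrm{ab}}$, $\beta$ and $=$ at the ends force, via the five lemma, the existence of a preimage in $K'_1(\Lambda(\mathcal{G}))$, and a fortiori in $K_1(B(\mathcal{G}))$; conversely a lift immediately yields (i) and (ii) by applying $\mathbf{L}$ and restricting to the abelian subquotients. Assembling these three steps — the snake sequence, the identification of the two obstruction groups, and the five-lemma/diagram-chase synthesis — gives the equivalence.
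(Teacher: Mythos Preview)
Your overall architecture (split into a logarithmic/cokernel obstruction and a torsion/kernel obstruction) matches the paper's, but you have the roles of (i) and (ii) reversed, and the concrete mechanism for the torsion step is missing.

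In the paper, condition (i) \emph{alone} guarantees that $t_{F_\infty/F}$ lies in the image of $\mathbb{L}_\mathcal{G}$: one uses the commutative diagram with exact rows
\[
\xymatrix{
1 \ar[r] & 1+\mathfrak{a} \ar[r]\ar@{->>}[d]^{\mathbb{L}_\mathcal{G}} & B(\mathcal{G})^\times \ar[r]\ar[d]^{\mathbb{L}_\mathcal{G}} & B(\mathcal{G}^{\mathrm{ab}})^\times \ar[r]\ar[d]^{\mathbb{L}_{\mathcal{G}^{\mathrm{ab}}}} & 1\\
0 \ar[r] & \tau(\mathfrak{a}) \ar[r] & TB(\mathcal{G}) \ar[r] & TB(\mathcal{G}^{\mathrm{ab}}) \ar[r] & 0
}
\]
together with the abelian case $\mathbb{L}_{\mathcal{G}^{\mathrm{ab}}}(\zeta_{F_\infty^{[\mathcal{G},\mathcal{G}]}/F})=t_{F_\infty^{[\mathcal{G},\mathcal{G}]}/F}$, to produce $y\in K_1(B(\mathcal{G}))$ with $\mathbb{L}(y)=t_{F_\infty/F}$ \emph{and} $\mathrm{pr}^\mathcal{G}_{\mathcal{G}^{\mathrm{ab}}}(y)=\zeta_{F_\infty^{[\mathcal{G},\mathcal{G}]}/F}$. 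So your Step~2 claim that ``the image of $\mathbb{L}$ inside $TB(\mathcal{G})$ is cut out by the torsion congruences'' is not how (ii) enters.

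Condition (ii) is used entirely in the torsion step. One sets $\omega:=\mathrm{Det}(y)^{-1}L_{F_\infty/F}$; from $\mathbf{L}(\omega)=0$ and injectivity of $\log$ on $1+pB_{\mathcal{O}_L}(\Gamma)$ one gets $\omega(\chi)^p=\Psi\omega(\psi_p\chi)$, hence $\omega$ has finite order. The paper then argues by induction on the group: for $\mathcal{G}'\subseteq\mathcal{G}$ abelian of index $p$, every irreducible $\chi$ is inflated from $\mathcal{G}^{\mathrm{ab}}$ (where $\omega=1$ by the choice of $y$) or induced from $\mathcal{G}'$, so it suffices to show $e:=\zeta_{F_\infty/F_\infty^{\mathcal{G}'}}/N^\mathcal{G}_{\mathcal{G}'}(y)=1$. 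The Wall congruence $N^\mathcal{G}_{\mathcal{G}'}(y)\equiv \mathrm{ver}^{\mathcal{G}^{\mathrm{ab}}}_{\mathcal{G}'}(\mathrm{pr}^\mathcal{G}_{\mathcal{G}^{\mathrm{ab}}}y)\bmod \mathrm{im}(\sigma^\mathcal{G}_{\mathcal{G}'})$ combined with (ii) gives $e\equiv 1\bmod \mathrm{im}(\sigma^\mathcal{G}_{\mathcal{G}'})$; then Higman's theorem (torsion units in $B(\Gamma')[H']$ are $\mu\times H'$) forces $e=1$. This chain---Wall congruence, (ii), Higman, induction---is the actual content, and it is absent from your proposal.

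Finally, your suggestion to run the five-lemma on diagram~\eqref{5termdiagram} is not available here: that diagram is offered only as a \emph{comparison} with Kakde's approach, and the exactness of its bottom row (or its $B$-analogue) is essentially Theorem~\ref{image}/\ref{imageB}, which is proved later and itself relies on Theorem~\ref{t_G}. Invoking it for Theorem~\ref{reductionlog} would be circular.
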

  \bn
  {\bf{Idea of Proof:}} (for the converse direction). If $t_{F_\infty/F}\in TB(\mathcal{G})$, then there exists $y\in K_1(B(\mathcal{G}))$ such that
 \begin{equation}\label{imageL}
t_{F_\infty/F} = \mathbb{L}(y)
 \end{equation}
   and           \begin{equation}
   \label{y}y \mapsto  \zeta_{F_\infty^{[\mathcal{G},\mathcal{G}]}/F}
     \end{equation}
  under the canonical map $\mathrm{pr}_{\mathcal{G}^{\mathrm{ab}}}^{\mathcal{G}}:
  K_1(B(\mathcal{G}))\to K_1(B(\mathcal{G}^{\mathrm{ab}}))$.
  This follows immediately from the abelian case
  $\mathbb{L}_{\mathcal{G}^{\mathrm{ab}}}(\zeta_{F_\infty^{[\mathcal{G},\mathcal{G}]}/F})
  =t_{F_\infty^{[\mathcal{G},\mathcal{G}]}/F}$ and the following commutative diagram
  \cite[lem.\   7.1 ii)]{RWIX}
    with exact rows
  $$
  \xymatrix{
  1 \ar[r] & 1 + \mathfrak{a}
  \ar[r]
  \ar@{->>}[d]
  & B(\mathcal{G})^{\times}
  \ar@{->>}[r] \ar[d]_{{\mathbb{L}}_{\mathcal{G}}} &
  B(\mathcal{G}^{\mathrm{ab}})^{\times}
  \ar[r]\ar[d]_{\mathbb{L}_{{\mathcal{G}}^{\mathrm{ab}}}} & 0\\
  0 \ar[r] & \tau({\mathfrak{a}}) \ar[r] & T B(\mathcal{G})
  \ar[r] & T B(\mathcal{G}^{\mathrm{ab}}) \ar[r] & 0\\
  & & t_{F_\infty/F} \ar@{|-{>}}[r]
 & t_{F_\infty^{[\mathcal{G},\mathcal{G}]}/F}
   }
  $$

\bn
  where $\mathfrak{a}:= \mathrm{ker}(B(\mathcal{G}) \rightarrow B(\mathcal{G}^{\mathrm{ab}}))$
  and $\tau(\mathfrak{a})$ is the image of $\mathfrak{a}$ with respect to the
  canonical map $\tau: B(\mathcal{G}) \twoheadrightarrow TB(\mathcal{G})$.
 \bn
  Setting $\omega:= \mathrm{Det}(y)^{-1}\cdot L_{F_\infty/F}$ we have by \eqref{y} that
    \begin{equation}
  \label{omega} \omega_{|R(\mathcal{G}^{\mathrm{ab}})}\equiv 1
    \end{equation}

  and by the definition of $t_{F^\infty/F}$
  $$
  {\mathbf{L}}(\omega)=0,
  $$
  whence \[\frac{\omega(\chi)^p}{\Psi\omega(\psi_p\chi)}=1,\] as $\log_{|1+p\,
  B_{{\mathcal{O}}_L}(\Gamma)}$
  is injective. Therefore $\omega(\chi)^{p^n}={\Psi}^n \omega(\psi_p^n\chi)=\Psi^n
  (\omega({\mathds{1}}))^{\chi({\mathds{1}})}=1$
  for $n$ sufficiently big such that $\psi_{p}^{n} \mathrm{R}
  (\mathcal{G}/Z) = \{\mathds{1}\}$.

  That means that $\omega$ is a torsion element. In [RW5, prop. 2.4] even
  uniqueness of $\omega$ is shown.
  We want to show that $\omega = 1$.
  Assume first that $\mathcal{G}$ contains an abelian subgroup
  $\mathcal{G}'$ of index $p$. Since then any irreducible
  representation $\chi$ of $\mathcal{G}$ is either inflated from an
  abelian character $\alpha$ of $\mathcal{G}^{\mathrm{ab}}$
  $$
  \chi = \rm{infl}_{\mathcal{G}^{\mathrm{ab}}}^{\mathcal{G}} \alpha
  $$
  or induced from an abelian character $\beta$ of $\mathcal{G}'$
  $$
  \chi = \rm{ind}_{\mathcal{G}'}^{\mathcal{G}} (\beta'),
  $$
  it suffices by \eqref{omega}  to verify that
 \begin{equation}\label{5}
\omega | \rm{Ind}_{\mathcal{G}'}^{\mathcal{G}}
  \mathcal{R}(\mathcal{G'}) \equiv 1 \;\;\; \rm{or} \;\;\;
  (\rm{Ind}_{\mathcal{G}'}^{\mathcal{G}})^* \omega \equiv 1
   \end{equation}

\bn
  But by the functoriality properties
  $ (\rm{Ind}_{\mathcal{G}'}^{\mathcal{G}})^* L_{F_{\infty}/F} =
  L_{F_{\infty}/F_{\infty}^{\mathcal{G}'}}$ and Det
  $N_{\mathcal{G}'}^{\mathcal{G}} (y) = (\mathrm{Ind}
  _{\mathcal{G}'}^{\mathcal{G}})^*
  {\mathrm{Det}} (y) $
  we obtain
  $$
  \left(\mathrm{Ind}_{\mathcal{G}'}^{\mathcal{G}}\right)^* \omega =
   \frac{\left({\mathrm{Ind}}_{\mathcal{G}'}^{\mathcal{G}} \right)^*
   L_{F_{\infty} / F}}
   {{\mathrm{Det}}\left( N_{\mathcal{G}'}^{\mathcal{G}} (y)\right)} =
   \mathrm{Det}
   \left( \frac{\zeta_{F_{{\infty} / F_{\infty}^{\mathcal{G}'}}}}
   {N_{\mathcal{G}'}^{\mathcal{G}} (y)} \right)
  $$

\bn where $N_{\mathcal{G}'}^{\mathcal{G}} :{{K}}_1({B}({\mathcal{G}})) \rightarrow
{{K}}_1({{B}}({\mathcal{G}'}))$ denotes the norm map. Since Det is injective on
${{K}}_1({B}({\mathcal{G}'}))$, we see that
\[\frac{\zeta_{{F_{\infty}} /
 {{F_{\infty}}^{\mathcal{G}'}}}}{N_{\mathcal{G}'}^{\mathcal{G}} (y)}\] is a
 torsion element.
  By the Wall-congruence ([RW3, proof of Lemma 12])
  $$
  N_{\mathcal{G}'}^{\mathcal{G}}(y) \equiv \mathrm{ver}
  _{\mathcal{G}'}^{\mathcal{G}^{\mathrm{ab}}} \left( pr_{\mathcal{G}^{\mathrm{ab}}}
  ^{\mathcal{G}} (y) \right) \mbox{ mod } \mbox{ im }
  (\sigma_{\mathcal{G}'}^{\mathcal{G}})
  $$
  (which corresponds in this special case to (M3) in Kakde's work, see \cite{sch-venkakde}: $N_{\mathcal{G}'}^{\mathcal{G}}(\theta_{\mathcal{G}'}(x)) \equiv \mathrm{ver}
  _{\mathcal{G}'}^{\mathcal{G}^{\mathrm{ab}}} \left( \theta_\mathcal{G} (x) \right) \mbox{ mod } \mbox{ im }
  (\sigma_{\mathcal{G}'}^{\mathcal{G}})$)
  it follows that by our choice of $y$ with
  $\mathrm{pr}_{{\mathcal{G}}^{\mathrm{ab}}}^{\mathcal{G}} (y)=
  \zeta_{{F_{\infty}}^{[{\mathcal{G}}, {\mathcal{G}}]}/F_{\infty}}$

$$
\begin{array}{lll}
e:=  \frac{\zeta_{{F_{\infty}}/{{F_{\infty}}^{\mathcal{G}'}}}} {N_{\mathcal{G}'}^{\mathcal{G}}(y)}
& =  \;\; \frac{\mathrm{ver}_{\mathcal{G}'}^{{\mathcal{G}}^{\mathrm{ab}}}
\left(\mathrm{pr}_{{\mathcal{G}}^{\mathrm{ab}}}^{\mathcal{G}} (y)\right)}
{N_{\mathcal{G}'}^{\mathcal{G}} (y)} & \frac{\zeta_{{F_{\infty}}/{{F_{\infty}}^{\mathcal{G}'}}}}
{\mathrm{ver}_{\mathcal{G}'}^{{\mathcal{G}}^{\mathrm{ab}}} \left(
\zeta_{{{F_{\infty}}^{[{\mathcal{G}},{\mathcal{G}}]}} / F_{\infty}}
\right)}\\
&&\\
 & \equiv
 \frac{\zeta_{{F_{\infty}}/{F_{\infty}^{\mathcal{G}'}}}}
 {\mathrm{ver}_{\mathcal{G}'}^{{\mathcal{G}}^{\mathrm{ab}}}
(\zeta_{{F_{\infty}^{[{\mathcal{G}},{\mathcal{G}}]}} / F_{\infty}})}
 & \stackrel{(ii)}{\equiv} \;\; 1 \;\; \mathrm{mod} \;\; \mathrm{im}
 (\sigma_{\mathcal{G}'}^{\mathcal{G}})
\end{array}
$$

\bn
 by our assumption. The Theorem follows from the

 \bn
 {\bf{Claim}:} If $e\in B(\mathcal{G}')^{\times}$ is torsion and satisfies
 $e\equiv1\mbox{ mod } \mathrm{im} (\sigma_{\mathcal{G}'}^\mathcal{G})$, then $e=1,$ cp.\ with
 \cite[proof of lem.\ 4.9]{sch-venkakde}.

 By  Higman's theorem we have: $e=\zeta h\equiv1\mbox{ mod }\mathrm{im}(\sigma_{\mathcal{G}'}^G)$
 with $h\in H'$ (for the group ring $B(\mathcal{G}')=B(\Gamma')[H']$,
 if $\mathcal{G}'$ decomposes as $\Gamma'\times H'$).
The augmentation map $\epsilon : {B}({\mathcal{G}'}) \rightarrow {B} (\Gamma')$ induces $ \zeta
\equiv 1 \;\mathrm{mod} \; p B({\Gamma}')$, whence
 $\zeta = 1$ and $h' -1 \in {\mathrm{im}}
 (\sigma_{\mathcal{G}}^{\mathcal{G}'})$.
 Thus $h' = 1, e = 1$ and \eqref{5} holds.\\

 By an inductive argument this argument can be extended to arbitrary
 $\mathcal{G}$, see proof of Theorem in [RW5, {\S}3].\hfill $\Box$

 \bn
 The hard part is now to inductively show that
 \begin{equation}
 \label{tintegral}t_{F_{\infty} / F} \in T {B}({\mathcal{G}})
 \end{equation}
 holds,  which requires the new M\"{o}bius-Wall congruence
 $$
 \sum_{{\mathcal{A}} \subseteq U \subseteq {\mathcal{G}}}
 \mu_{{\mathcal{G}}/{\mathcal{Z}}} (U/{\mathcal{Z}})
 {\mathrm{ver}}_{\mathcal{A}}^{U^{\mathrm{ab}}}
 (\zeta_{F_{\infty}^{[U, U]} / F_{\infty}^U}) \in \mathrm{im}
 (\sigma_{\mathcal{A}}^{\mathcal{G}})
  $$
for any abelian normal open subgroup $\mathcal{A}\trianglelefteq\mathcal{G}$ (actually for each
such one-dimensional subextension) introduced in \cite{RWIX}, more precisely in (loc.\ cit.) only a
similar relation for units in $B(\mathcal{G})$ is called M\"{o}bius-Wall congruence.
 Recall that for a finite $p$-group $G$, the M\"{o}bius-function is defined
 inductively as follows
 $$
 \begin{array}{rl}
  \mu_G{(1)} & = 1\\
  \mu_G (U)   & = - \sum_{V \subsetneq U}
  \mu_{\mathcal{G}}(V) \;\; {\rm{for}} \;\; 1 \neq U \subseteq
  {{G}}.
 \end{array}
 $$

 How this condition enters the proof will be explained in   the next section, in which  we try first
 to abstract and formalise what the methods of Ritter and Weiss actually
 prove.

 \section{The abstract setting - a reinterpretation of Ritter and Weiss' approach}

Fix a one dimensional pro-$p$-group $\G$ with projection onto $\Gamma\cong\mathbb{Z}_p,$ let $H$
denote its kernel and define the following index sets
\begin{eqnarray*}
S:=S_\G &:=&\{U \mbox{ one-dimensional subquotient of }\G\}\\ &\phantom{,}=&\{U| U\subseteq
\G/C\mbox{ open  for some } C\trianglelefteq\G \mbox{ with } C\subseteq H\}
\end{eqnarray*} and
\[S^{\mathrm{ab}}:=S_\G^{\mathrm{ab}}=\{U\in S_\G| U\mbox{ abelian}\}.\] Note that for any $U\in S$ the quotient
$U^{\mathrm{ab}}$ also belongs to $S$ and in particular to $S^{\mathrm{ab}}.$ Define
$\widetilde{\Phi}_B$ to be the subgroup of $\prod_{U\in S^{\mathrm{ab}}_\G} B(U)^\times$ consisting
of those $(\lambda_U)_{U\in S^{\mathrm{ab}}_\G}$ satisfying the following conditions:

\bn (RW1) \parbox[t]{13cm}{For every surjection $U\twoheadrightarrow V$ in $S^{\mathrm{ab}}$ we
have
\[\mathrm{pr}^U_V\lambda_U=\lambda_V,\] where $\mathrm{pr}^U_V: K_1(B(U))\rightarrow K_1(B(V))$ is
the natural map induced by the projection, and for  every inclusion $V\subseteq U$ in
$S^{\mathrm{ab}}$ we have
\[N^U_V\lambda_U=\lambda_V,\] where $N^U_V$ denotes the norm map.}

 \bn
 (RW2) \parbox[t]{13cm}{For all $U\in S_\G$ the (sub-)tuple $(\lambda_{V^{\mathrm{ab}}})_{V\subseteq U}$ is
 $U$-invariant.}

 \bn
 (RW3) \parbox[t]{13cm}{(M\"{o}bius-Wall congruence) For all $U\in S_\G$ and all abelian normal open subgroups $\A\trianglelefteq U$  we have
 \[\sum_{\A\subseteq V\subseteq U} \mu_{U/\A}(V/\A) \mathrm{ver}^{V }_\A(\lambda_{V^{\mathrm{ab}}})\in \sigma^U_\A(B(\A)).\]  }

In particular the  torsion congruence

\bn (RW3a) \parbox[t]{13cm}{(Torsion congruence) For all $U\in S_\G$ and all abelian normal open
subgroups $\A\trianglelefteq U$  of index $p$ we have
 \[  \mathrm{ver}^{U}_\A(\lambda_{U^{\mathrm{ab}} })-\lambda_\A \in \sigma^U_\A(B(\A)).\]  }

Actually Ritter and Weiss show that (RW3) holds for every tuple which arises from an element
$\vartheta\in K_1(B(\G)),$ see \cite[thm.\ 2]{RWIX}. Strictly speaking, they only call the relation
for such $\vartheta$ M\"{o}bius-Wall congruence, but we extend this notation to tuples in
$\widetilde{\Phi}_B.$  The proof generalises Wall's proof of (RW3a) by analysing the
Leibniz-formula for determinants; for combinatorial reasons the M\"{o}bius function shows up. Is it by
chance that this or a similar M\"{o}bius function also shows up in the explicit formula of Brauer
induction? While the proof of (RW3) is rather tedious it is straightforward to check (RW1) and
(RW2).

Using explicit Brauer induction (as at the beginning of section \ref{homdescription}), for any
$U\in S_\G$ one can assign to  a tuple $(\lambda_V)\in\widetilde{\Phi}_B,$ or rather to its
sub-tuple $(\lambda_V)_{V\subseteq U},$  elements
\[\Xi_U\in \mathrm{Hom}_{G_{{\mathbb{Q}}_p}, R(\Gamma_U)}^{(1)} (R({U}),B_{{\mathcal{O}}_L}
(\Gamma_U)^{\times})\] and
\[t_U:=Tr^{-1}(\mathbf{L}_U(\Xi_U))\in TB(U)[\frac{1}{p}],\]
such that $t_U=\mathbb{L}_U(\lambda_U)\in TB(U)$ for all $U\in S^{\mathrm{ab}}_\G.$ Indeed, for
every one-dimensional character $\rho$ of $U$ (which is trivial on some central subgroup
$\mathcal{Z}_U$ of $U$) $a_U(\rho)$ is represented by $\rho$ itsself under explicit Brauer
induction by \cite{boltje}. Finally we require

\bn (RW4) \parbox[t]{13cm}{ For any $U\in S_\G,$ the definition of $\Xi_U$ does not depend on the
above chosen way by explicit Brauer induction, i.e.\
\[\Xi_U(\rho)=\prod_{V} \mathrm{Det}_{V^{\mathrm{ab}}}(\lambda_{V^{\mathrm{ab}}})(\chi_{V})^{n_V},\] whenever  \[\rho=\sum n_V
\mathrm{Ind}_U^V\chi_V
\] in $R(U)$  for certain subgroups $V\subseteq U,$  one-dimensional representations $\chi_V$ of $V$ and (finitely many nonzero) integers $n_V.$}

This conditions looks a little weird, but whenever one is interested in $p$-adic $L$-functions, it
is completely harmless, as it is always satisfied by the usual behaviour of $L$-functions under
induction.

\bn {\bf Questions:} Do conditions (RW1-3) imply already (RW4)? Is there a way of proving the next
Lemma \ref{functorial} without requiring (RW4)?

All what one needs to extend Ritter and Weiss proof of property \eqref{tintegral}  to Theorem
\ref{t_G} below are the following functoriality properties.

\begin{lemma}\label{functorial}
 $\mathrm{pr}^U_V(\Xi_U)=\Xi_V$ and $N^U_V(\Xi_U)=\Xi_V$
  as well as similarly   $\mathrm{pr}^U_V(t_U)=t_V$ and $\tilde{\mathrm{Tr}}^U_V(t_U)=t_V$ for all possible $U,V\in S_\G,$
  where the modified trace $\tilde{\mathrm{Tr}}^U_V$ is introduced in \eqref{modtrace} below.
\end{lemma}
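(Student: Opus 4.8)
The plan is to prove the four functoriality statements by reducing them, via explicit Brauer induction, to the corresponding functoriality properties of $\mathrm{Det}$ and of the integral logarithm that are already available in the one-dimensional abelian (indeed pro-$p$) case, exactly as Ritter and Weiss do in \cite{RWIX}.

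\textbf{Step 1: the abelian components.} First I would recall that by construction $\Xi_W$ for $W\in S^{\mathrm{ab}}_\G$ is nothing but $\mathrm{Det}_{W}(\lambda_W)$ (since for a one-dimensional $\rho$ the Brauer induction $a_W(\rho)$ is represented by $\rho$ itself, by \cite{boltje}), and correspondingly $t_W=\mathbb{L}_W(\lambda_W)$. Hence for $U,V\in S^{\mathrm{ab}}_\G$ the identities $\mathrm{pr}^U_V(\Xi_U)=\Xi_V$, $N^U_V(\Xi_U)=\Xi_V$ are just the functoriality of $\mathrm{Det}$ under deflation and norm (\cite[Lem.\ 9]{RWII}) applied to the relations (RW1) satisfied by $(\lambda_U)$, while $\mathrm{pr}^U_V(t_U)=t_V$ and $\tilde{\mathrm{Tr}}^U_V(t_U)=t_V$ follow from the analogous functoriality of $\mathbb{L}$ (respectively the trace map) together with (RW1); the modified trace $\tilde{\mathrm{Tr}}^U_V$ is precisely the correction needed so that the square relating $\mathbb{L}_U$ and $\mathbb{L}_V$ along a norm map commutes, see \eqref{modtrace}.

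\textbf{Step 2: passing to general $U,V\in S_\G$.} For the general case I would fix $U\in S_\G$, a subgroup or quotient $V$, and an irreducible (or merely one-dimensional, after twisting as in the proof of the first Lemma) representation $\rho$ of $V$, and evaluate. For deflation $U\twoheadrightarrow V$: write $\rho = \sum n_W \mathrm{Ind}^V_W \chi_W$ using explicit Brauer induction on $V$; its inflation to $U$ is $\sum n_W \mathrm{Ind}^U_{\tilde W}\tilde\chi_W$ where $\tilde W$ is the preimage of $W$ in $U$ and $\tilde\chi_W$ the inflation of $\chi_W$. By (RW4) applied to $U$, $\Xi_U$ of this inflated character equals $\prod_W \mathrm{Det}_{\tilde W^{\mathrm{ab}}}(\lambda_{\tilde W^{\mathrm{ab}}})(\tilde\chi_W)^{n_W}$; using (RW1) ($\mathrm{pr}$-compatibility of the $\lambda$'s along $\tilde W^{\mathrm{ab}}\twoheadrightarrow W^{\mathrm{ab}}$) and the naturality of $\mathrm{Det}$ this collapses to $\prod_W \mathrm{Det}_{W^{\mathrm{ab}}}(\lambda_{W^{\mathrm{ab}}})(\chi_W)^{n_W} = \Xi_V(\rho)$. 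For restriction $V\subseteq U$: write $\rho=\sum n_W\mathrm{Ind}^V_W\chi_W$ on $V$, so $\mathrm{Ind}^U_V\rho = \sum n_W\mathrm{Ind}^U_W\chi_W$ is a Brauer decomposition on $U$; then $(N^U_V\Xi_U)(\rho)=\Xi_U(\mathrm{Ind}^U_V\rho)$ computed by (RW4) on $U$ gives $\prod_W\mathrm{Det}_{W^{\mathrm{ab}}}(\lambda_{W^{\mathrm{ab}}})(\chi_W)^{n_W}=\Xi_V(\rho)$. The statements for $t_U$ then follow by applying $\mathbf{L}_U$, respectively $\mathbf{L}_V$, to these identities and using that $t_U=Tr^{-1}\mathbf{L}_U(\Xi_U)$, together with the compatibility of $\mathbf{L}$ with deflation and (in the modified form) with induction/norm, which is where $\tilde{\mathrm{Tr}}^U_V$ enters.

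\textbf{Main obstacle.} The routine bookkeeping is the interplay of the three group-theoretic operations (inflation, induction, the passage $W\rightsquigarrow W^{\mathrm{ab}}$) and checking that the Brauer decomposition on $U$ obtained by inflating/inducing a decomposition on $V$ is still a legitimate input for (RW4) — this is exactly why (RW4) is imposed in the stated (slightly redundant-looking) form. The genuine difficulty, however, is the induction/norm case for the logarithmic pieces $t_U$: the integral logarithm $\mathbb{L}$ does \emph{not} commute with induction on the nose, and one must use the modified trace $\tilde{\mathrm{Tr}}^U_V$ of \eqref{modtrace} — defined precisely so that $\tilde{\mathrm{Tr}}^U_V\circ\mathbb{L}_U=\mathbb{L}_V\circ N^U_V$ — and verify that the Brauer-induction construction of $t_U$ is compatible with it. This is the technical heart, and it mirrors the corresponding verification in Ritter and Weiss's proof of \eqref{tintegral}; everything else is formal manipulation of (RW1), (RW4) and the naturality of $\mathrm{Det}$.
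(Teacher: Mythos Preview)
Your proposal is correct and follows essentially the same route as the paper's proof: use (RW4) together with transitivity of induction for the norm case, inflation along preimages plus (RW1) for the deflation case, and then deduce the statements for $t_U$ from the functoriality of $Tr^{-1}\circ\mathbf{L}_U$ (the paper simply cites \cite[lem.\ 7.2]{RWIX} for this last step rather than treating it as the ``technical heart''). One small refinement you miss: the paper observes that the deflation identity $\mathrm{pr}^U_V(\Xi_U)=\Xi_V$ actually does \emph{not} require (RW4), since explicit Brauer induction is compatible with inflation; only the norm identity genuinely needs (RW4).
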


\begin{proof}
While it is well-known that explicit Brauer induction (given by $a_G$ as above) behaves well under
inflation, it does not behave well under induction. Therefore we need at present condition (RW4)
here to prove the correct behaviour under the norm: For $V\subseteq U,$ let \[\rho=\sum_{W\subseteq
V} n_W \mathrm{Ind}_V^W \chi_W\] in $R(V).$ Then, by the transitivity of induction  we obtain
\[\mathrm{Ind}(\rho)=\sum_{W\subseteq V} n_W
\mathrm{Ind}_U^W \chi_W\] in $R(U).$ By the definition of the norm on the Hom-description we thus
have
\begin{eqnarray*}
(N^U_V\Xi_U)(\rho)&=&\Xi_U(\mathrm{Ind}(\rho))\\
 &=& \prod_{W\subseteq V} \mathrm{Det}_{W^{\mathrm{ab}}}(\lambda_{W^{\mathrm{ab}}})(\chi_W)^{n_W}\\
 &=& \Xi_V(\rho).
\end{eqnarray*}
For $pr:U\twoheadrightarrow V$ we obtain
\[\mathrm{infl}^U_V(\rho)=\sum_{W\subseteq V} n_W \mathrm{Ind}^{W'}_U(\mathrm{infl}^{W'}_W\chi_W),\]
where $W':=pr^{-1}(W)$ is the full preimage of $W$ under $pr.$ Hence
\begin{eqnarray*}
 \mathrm{pr}^U_V(\Xi_U)(\rho)&=&\Xi_U(\mathrm{infl}^U_V(\rho)) \\
   &=&\prod_{W\subseteq V} \mathrm{Det}_{W'^{\mathrm{ab}}}(\lambda_{W'^{\mathrm{ab}}})(\mathrm{infl}^{W'}_W\chi_W)^{n_W}\\
   &=&\prod_{W\subseteq V} \mathrm{Det}_{W^{\mathrm{ab}}}(\mathrm{pr}^{{W'}^{\mathrm{ab}}}_{W^{\mathrm{ab}}}({\lambda_{{W'}^{\mathrm{ab}}}}))( \chi_W)^{n_W}\\
   &=&\prod_{W\subseteq V} \mathrm{Det}_{W^{\mathrm{ab}}}( \lambda_{W^{\mathrm{ab}}})( \chi_W)^{n_W}\\
   &=& \Xi_V(\rho).
\end{eqnarray*}
(the last property does not require (RW4)!). The corresponding statements for $t_U$ follow from the
functorial properties of $Tr^{-1}\circ\mathbf{L}_U$, see \cite[lem.\ 7.2]{RWIX}.
\end{proof}

\begin{theorem}
For every $(\lambda_U)_U\in\widetilde{\Phi}_B$ there exist $\lambda_\G\in K_1(B(\G))$ such that
\[\mathrm{Det}_\G(\lambda_\G)=\Lambda_\G.\] 
\end{theorem}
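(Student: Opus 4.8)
The plan is to run, at this abstract level, the very argument by which Ritter and Weiss deduce the Main Conjecture from the M\"{o}bius-Wall congruence: split the problem along the integral logarithm into an ``additive'' integrality statement and a torsion congruence, and note that the latter is already built into the definition of $\widetilde{\Phi}_B$.

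\emph{Step 1 (reduction to integrality).} First I would set up the abstract counterpart of Theorem \ref{reductionlog}, with $L_{F_\infty/F}$ replaced by $\Xi_\G$ and $t_{F_\infty/F}$ by $t_\G=Tr^{-1}(\mathbf{L}_\G(\Xi_\G))$: a $\lambda_\G$ with $\mathrm{Det}_\G(\lambda_\G)=\Xi_\G$ exists as soon as $t_\G\in TB(\G)$. The proof of the converse direction of Theorem \ref{reductionlog} transfers essentially verbatim, as it only uses (a) the commutative square relating $\mathbb{L}_\G$, $\mathbb{L}_{\G^{\mathrm{ab}}}$ and $\mathfrak{a}=\ker(B(\G)\to B(\G^{\mathrm{ab}}))$, with the surjectivity of $\mathbb{L}_\G$ on $1+\mathfrak{a}$; (b) the identity $\mathbb{L}_{\G^{\mathrm{ab}}}(\lambda_{\G^{\mathrm{ab}}})=t_{\G^{\mathrm{ab}}}$, which holds by the very definition of $t_U$ for $U\in S^{\mathrm{ab}}_\G$ and which in particular places $t_\G$ in the image of $\mathbb{L}_\G$; (c) the compatibility of $\mathrm{Det}$ and of the $t_U$ under norm and projection, furnished here by Lemma \ref{functorial} together with (RW1), (RW2); (d) Higman's theorem and the Wall congruence; and (e) the torsion congruence, which in the abstract setting is precisely (RW3a), hence a special case of the hypothesis (RW3). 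So it remains to prove
\[ t_\G\in TB(\G). \]

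\emph{Step 2 (integrality via M\"{o}bius-Wall).} The statement ``$t_V\in TB(V)$'' for $V\in S_\G$ is of the same form as the one for $\G$ (the restriction of $(\lambda_U)$ again satisfies (RW1)--(RW4) relative to $V$), and it is immediate when $V$ is abelian, since then $t_V=\mathbb{L}_V(\lambda_V)\in TB(V)$ by construction. By Lemma \ref{functorial}, $\mathrm{pr}^\G_V(t_\G)=t_V$ and $\tilde{\mathrm{Tr}}^\G_V(t_\G)=t_V$ for every proper subquotient $V$, so the images of $t_\G$ under all modified traces $\tilde{\mathrm{Tr}}^\G_\A$ into abelian normal open subquotients $\A\trianglelefteq\G$ are integral. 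What one needs is then the integrality criterion of Ritter and Weiss: such an element of $TB(\G)[\frac{1}{p}]$ is already integral provided the family of relations obtained from its $\tilde{\mathrm{Tr}}^\G_\A$-images by M\"{o}bius inversion over the intermediate subgroups $\A\subseteq W\subseteq\G$ is satisfied --- and that family is exactly the M\"{o}bius-Wall congruence (RW3) for the tuple $(\lambda_U)$, which is part of the hypothesis. Concretely I would work through the conjugacy-class decomposition of $TB(\G)$ (the $\mathcal{O}[[\mathrm{Conj}(\G)]]$-picture of \eqref{5termdiagram}, and \cite[(11)]{sch-venkakde} for the $B$-variant) and reduce ``integrality on each conjugacy class'' to (RW3), following the inductive analysis of \cite{RWIX}, which generalises Wall's study of the Leibniz expansion of a determinant.

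\emph{Main obstacle.} The genuine difficulty is this integrality criterion --- identifying the combinatorial relation forced on the abelian traces of $t_\G$ with the M\"{o}bius-Wall congruence (RW3), and showing that (RW3) really does suffice to repair non-integrality in $TB(\G)$. This demands a precise understanding of the submodules $\sigma^U_\A(B(\A))$ appearing in (RW3) as the exact obstruction modules, and careful tracking of how the M\"{o}bius function intervenes; it is the technical heart of \cite{RWIX} and the abstract form of property \eqref{tintegral}. A secondary point to verify is that condition (RW4)/Lemma \ref{functorial} really covers every use of the Iwasawa-theoretic functoriality of $L$-functions made in the original proof of Theorem \ref{reductionlog}.
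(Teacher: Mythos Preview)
Your overall strategy is correct and coincides with the paper's: Step~1 is exactly the analogue of Theorem~\ref{reductionlog} and reduces the statement to $t_\G\in TB(\G)$, which is Theorem~\ref{t_G}. Your identification of (RW3a) as the required torsion congruence and of (RW3) as the key input for integrality is also right.

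Where your outline of Step~2 diverges from the paper is in how (RW3) is actually used. You phrase it as a clean ``integrality criterion'' on $TB(\G)[\frac{1}{p}]$ in terms of M\"obius-inverted trace relations, and propose working via the conjugacy-class picture $\mathcal{O}[[\mathrm{Conj}(\G)]]$ of \eqref{5termdiagram}. The paper instead runs a minimal-counterexample argument: one assumes $\G$ is minimal with respect to $|[\G,\G]|$ and then $[\G:Z(\G)]$, picks a central $c\in[\G,\G]$ of order $p$, sets $C=\langle c\rangle$, and fixes a maximal abelian normal $\A$. The key injectivity is Lemma~\ref{injectivemodtrace} for the map $\tilde\beta$ into $\prod_{U\in S(\G,\A)}B(U^{\mathrm{ab}})$, not the $\mathrm{Conj}$-picture (that is Kakde's framework; the paper stresses the difference between $\tilde{\mathrm{Tr}}$ here and Kakde's unmodified trace). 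One then lifts $t_{\bar\G}\in TB(\G/C)$ (integral by minimality) to some $t=\mathbb{L}_\G(\vartheta)\in TB(\G)$, and (RW3) enters in a very specific way: by comparing the M\"obius-Wall relation for the given tuple $(\lambda_U)$ with the one automatically satisfied by the tuple $(\mathrm{pr}^U_{U^{\mathrm{ab}}}N^\G_U(\vartheta))$, one obtains $N^\G_\A(\vartheta)\lambda_\A^{-1}\in 1+\sigma^\G_\A(B(\A))$, hence $\tilde{\mathrm{Tr}}^\G_\A(t)-t_\A\in\sigma^\G_\A(\mathfrak b)$, which is the precise image of $\tau\mathfrak a$. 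This allows one to adjust $t$ so that $\tilde\beta_\A(t)=t_\A$. The remaining mismatch is then handled by a separate ``support'' argument (minimising the set of $U$ with $x_U(t)\neq0$), using cyclicity of $U/\A$ and explicit $\tilde{\mathrm{Tr}}$-computations, rather than by (RW3) directly. So (RW3) is not a global integrality criterion on $TB(\G)[\frac1p]$ but the tool that matches the lift at the single group $\A$; the rest of the matching is combinatorics that does not invoke (RW3) again.
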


{\bf Question:} How big is the kernel of $\mathrm{Det}_\G?$

In order to obtain \eqref{imageDet} from this Theorem we just have to observe that for the tuple
$(\lambda_U)_{U\in S^{\mathrm{ab}}}$ consisting of the abelian pseudomeasures
$\lambda_U=\lambda_{F_\infty^C/F_\infty^V}$ if $U=V/C\in S^{\mathrm{ab}}$ for some subgroup
$V\subseteq\mathcal{G}=G(F_\infty/F)$ and some normal subgroup $C\trianglelefteq G$ contained in
$H$ the associated element $\Xi_G$ equals $L,$ supposed of course that $(\lambda_U)_{U\in
S^{\mathrm{ab}}}$ belongs to $\widetilde{\Phi}_B.$ While (RW1), (RW2) and (RW4) are well-known
properties, (RW3) forms a completely new property which is proved by Ritter and Weiss using the
$q$-expansion principle of Deligne-Ribet.

Now we are going to prove the Theorem:  using (RW3a) and the analogue of Theorem
\ref{reductionlog}, which also can be proved in this general setting, it suffices to prove
\begin{theorem}\label{t_G}
In the situation of the above Theorem we have \[t_\G\in TB(\G).\]
\end{theorem}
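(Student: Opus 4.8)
The plan is to mimic Ritter and Weiss's inductive proof of \eqref{tintegral}, now carried out in the abstract setting of $\widetilde{\Phi}_B$. First I would set up the induction: we induct on the order of the finite quotient through which the relevant one-dimensional subextension factors, i.e.\ on $[\mathcal{G}:\mathcal{A}]$ for $\mathcal{A}$ a suitable abelian normal open subgroup of $\mathcal{G}$. The base case is $\mathcal{G}$ abelian, where $t_\mathcal{G}=\mathbb{L}_\mathcal{G}(\lambda_\mathcal{G})\in TB(\mathcal{G})$ is immediate from the construction of $t_U$ (and from the fact that for abelian $U$ explicit Brauer induction is trivial, so $\Xi_U=\mathrm{Det}_U(\lambda_U)$). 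For the inductive step I would first reduce, exactly as in Theorem~\ref{reductionlog}, to the case that $\mathcal{G}$ contains an abelian normal subgroup $\mathcal{A}$ of index $p$; if no such $\mathcal{A}$ exists one passes to a chain and invokes the inductive hypothesis for proper subquotients via Lemma~\ref{functorial}.

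Next, assuming $\mathcal{A}\trianglelefteq\mathcal{G}$ abelian of index $p$, I would analyse the image of $t_\mathcal{G}$ under the natural maps. By Lemma~\ref{functorial} we have $\mathrm{pr}^{\mathcal{G}}_{\mathcal{G}^{\mathrm{ab}}}(t_\mathcal{G})=t_{\mathcal{G}^{\mathrm{ab}}}\in TB(\mathcal{G}^{\mathrm{ab}})$ and $\tilde{\mathrm{Tr}}^{\mathcal{G}}_{\mathcal{A}}(t_\mathcal{G})=t_\mathcal{A}\in TB(\mathcal{A})$, so the obstruction to $t_\mathcal{G}$ lying in $TB(\mathcal{G})$ (rather than $TB(\mathcal{G})[\frac1p]$) lives in a subquotient that is controlled by the kernels of these two maps. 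The standard structure theory (cf.\ \cite[lem.\ 7.1]{RWIX}) identifies the relevant piece with a quotient of the group ring of $\mathcal{A}$ by the action of $\mathcal{G}/\mathcal{A}\cong\mathbb{Z}/p$, i.e.\ with something measured by $\sigma^{\mathcal{G}}_{\mathcal{A}}$, and the $p$-denominator of $t_\mathcal{G}$ is then pinned down by a single congruence modulo $\mathrm{im}(\sigma^{\mathcal{G}}_{\mathcal{A}})$ among the $\lambda_V$'s.

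The heart of the argument is to show that this congruence is precisely the M\"obius--Wall congruence (RW3) for the pair $\mathcal{A}\trianglelefteq\mathcal{G}$. Here is where I expect the main obstacle: one must trace through the definition of $\Xi_\mathcal{G}$ via explicit Brauer induction and of $t_\mathcal{G}=Tr^{-1}\mathbf{L}_\mathcal{G}(\Xi_\mathcal{G})$, expand the determinant/trace using the Leibniz-type formula (as in Wall's original proof of (RW3a)), and check that the combinatorial coefficients that appear are exactly the values $\mu_{\mathcal{G}/\mathcal{A}}(V/\mathcal{A})$ of the M\"obius function on the subgroup lattice between $\mathcal{A}$ and $\mathcal{G}$. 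Once this identification is made, (RW3) says exactly that the potential $p$-denominator of $t_\mathcal{G}$ vanishes, giving $t_\mathcal{G}\in TB(\mathcal{G})$; and since the congruences $\mathrm{pr}^{\mathcal{G}}_{\mathcal{G}^{\mathrm{ab}}}(t_\mathcal{G})=t_{\mathcal{G}^{\mathrm{ab}}}$ and $\tilde{\mathrm{Tr}}^{\mathcal{G}}_{\mathcal{A}}(t_\mathcal{G})=t_\mathcal{A}$ already place $t_\mathcal{G}$ in $TB(\mathcal{G})\otimes\mathbb{Q}_p$ over the two boundary pieces, a diagram chase of the kind indicated in the Remark after Theorem~\ref{reductionlog} (the abstract substitute for Kakde's $5$-lemma) completes the inductive step. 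The delicate points to get right are the compatibility of explicit Brauer induction with the $p$-th Adams operator (needed for $\mathbf{L}$ to interact well with induction) and the precise bookkeeping of $\sigma^{\mathcal{G}}_{\mathcal{A}}$-cosets, both of which are exactly the places where condition (RW4) and the functoriality of Lemma~\ref{functorial} are used.
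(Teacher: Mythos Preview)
Your plan has a genuine structural gap: the reduction to the case of an abelian normal subgroup $\mathcal{A}\trianglelefteq\mathcal{G}$ of index $p$ does not go through. Not every one-dimensional pro-$p$ group $\mathcal{G}$ admits such an $\mathcal{A}$ (a maximal subgroup of index $p$ is always normal, but need not be abelian), and your fallback ``pass to a chain and invoke the inductive hypothesis for proper subquotients via Lemma~\ref{functorial}'' does not explain how integrality of $t_U$ for all proper subquotients $U$ forces $t_\mathcal{G}\in TB(\mathcal{G})$; Lemma~\ref{functorial} only tells you that the images of $t_\mathcal{G}$ under the various projections and modified traces are integral, which is a necessary but not sufficient condition. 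Moreover, when $[\mathcal{G}:\mathcal{A}]=p$ the M\"obius--Wall sum in (RW3) collapses to two terms and becomes exactly the torsion congruence (RW3a); so your argument, even where it applies, never uses the full strength of (RW3). The paper introduces (RW3) precisely because (RW3a) alone is \emph{not} enough for the general inductive step.

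The paper's proof is organised quite differently. One argues by contraposition, choosing a minimal counterexample first with respect to the order of $[\mathcal{G},\mathcal{G}]$ and then with respect to $[\mathcal{G}:Z(\mathcal{G})]$. One picks a central $c\in[\mathcal{G},\mathcal{G}]$ of order $p$, sets $C=\langle c\rangle$ and $\bar{\mathcal{G}}=\mathcal{G}/C$, and fixes a \emph{maximal} abelian normal subgroup $\mathcal{A}$ (of arbitrary index). The key technical input you are missing is Lemma~\ref{injectivemodtrace}: the map $\tilde{\beta}=(\mathrm{pr}^U_{U^{\mathrm{ab}}}\tilde{\mathrm{Tr}}^\mathcal{G}_U)_U$ from $TB(\mathcal{G})$ into $\prod_{U\in S(\mathcal{G},\mathcal{A})} B(U^{\mathrm{ab}})$ is \emph{injective}, so it suffices to construct some $t\in TB(\mathcal{G})$ with $\tilde{\beta}(t)=(t_{U^{\mathrm{ab}}})_U$. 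One lifts $t_{\bar{\mathcal{G}}}\in TB(\bar{\mathcal{G}})$ (integral by minimality of $|[\mathcal{G},\mathcal{G}]|$) to $t=\mathbb{L}_\mathcal{G}(\vartheta)\in TB(\mathcal{G})$; the full M\"obius--Wall congruence (RW3) over \emph{all} $\mathcal{A}\subseteq V\subseteq\mathcal{G}$ is then exactly what is needed to show $N^\mathcal{G}_\mathcal{A}(\vartheta)\lambda_\mathcal{A}^{-1}\in 1+\sigma^\mathcal{G}_\mathcal{A}(B(\mathcal{A}))$, allowing one to adjust $t$ so that $\tilde{\mathrm{Tr}}^\mathcal{G}_\mathcal{A}(t)=t_\mathcal{A}$. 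Finally one disposes of the remaining components $\tilde{\beta}_U(t)$ by a support-minimizing argument using the second minimality hypothesis on $[\mathcal{G}:Z(\mathcal{G})]$. None of these steps --- the double minimality, the quotient by $C$, the injectivity of $\tilde{\beta}$ over the whole range $S(\mathcal{G},\mathcal{A})$, or the support argument --- appears in your outline.
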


\bn {\bf Idea of proof:} Actually we shall show that $t_U\in TB(U)$ for all $U\in S_\G$ by
contraposition (alternatively one could formulate the argument using induction): If this statement
is false we firstly may consider among the counterexamples those $U$ for which the order of $[U,U]$
is minimal (this order must be different from one as the claim of the proposition holds for all
abelian $U$). Among those $U$ we may assume that the order of $[U:Z(U)]$ is minimal, where $Z(U)$
denotes the centre of $U.$ Without loss of generality we may and do assume that these minimality
conditions already hold for $\G$ itself. Then we choose  a central element $c\in [\G,\G]$ of order
$p$ and set $C:=<c>.$ Furthermore we choose a maximal (with respect to inclusion) abelian normal
subgroup $\A$ of $\G,$ which then automatically contains $C.$ Also we fix a central subgroup
$\Z\cong\mathbb{Z}_p$ of $\G$ contained in $\A.$

In order to arrive at a contradiction we will use  the following lemma of Ritter and Weiss, in
which  $\tilde{\mathrm{Tr}}^U_V $ for $V\subseteq U$ denotes the modified trace map defined by
Ritter and Weiss (extending the definition in \cite{oliver-book} to their Iwasawa theoretic
Hom-description) to make the following diagram, in which $N^U_V$ denotes the norm map, commutative
\begin{equation}
\label{modtrace}\xymatrix{
  K_1(B(U )) \ar[d]_{N^U_V} \ar[r]^{\mathbb{L}_U } & TB(U) \ar[d]^{\tilde{\mathrm{Tr}}^U_V } \\
  K_1(B(V))\ar[r]^{ \mathbb{L}_V} & TB(V) .  }\end{equation}
Note that here we encounter another significant difference in comparison to Kakde's approach, who
uses in a similar situation the original trace map $\mathrm{Tr}^V_U,$ but then has to take a
modified integral logarithm map $\widetilde{\mathbb{L}}_V$ in order to obtain a similar commutative
diagram (which then induces the diagram in \cite[prop.\ 4.4 and (11)]{sch-venkakde} involving
$\Phi$ and $\Psi$).

\begin{lemma}\label{injectivemodtrace}
Defining $\tilde{\beta}=(\tilde{\beta}_U)$ by
$\tilde{\beta}_U=\mathrm{pr}^U_{U^{\mathrm{ab}}}\tilde{\mathrm{Tr}}^\G_U$ for $U$ in
$S(\G,\A):=\{U\in S_\G|\A\subseteq U\}$ we obtain a commutative diagram of {\em injective}
homomorphisms
\[\xymatrix{
  TB(\G) \ar[d]_{ } \ar[r]^(0.4){\tilde{\beta}} & {\prod_{U\in S(\G,\A)} B(U^{\mathrm{ab}})} \ar[d]^{ } \\
  TB(\G)[\frac{1}{p}] \ar[r]^(0.35){\tilde{\beta}[\frac{1}{p}]} & {\prod_{U\in S(\G,\A)} B(U^{\mathrm{ab}})[\frac{1}{p}].}    }\]
\end{lemma}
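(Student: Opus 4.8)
The plan is to reduce the injectivity of $\tilde\beta$ on $TB(\G)$ to the injectivity of the individual modified traces $\tilde{\mathrm{Tr}}^\G_U$ composed with $\mathrm{pr}^U_{U^{\mathrm{ab}}}$, and ultimately to a statement about the ordinary trace on $p$-group rings that is essentially Oliver's. First I would observe that the square in the statement commutes for trivial reasons: the vertical maps are the localisation maps $TB(U^{\mathrm{ab}})\hookrightarrow TB(U^{\mathrm{ab}})[\frac1p]$ (which are injective because $TB(U^{\mathrm{ab}})$, being of the form $\Lambda(\Gamma_U)^{?}$-free of finite rank over a domain, is $p$-torsion-free), and $\tilde\beta[\frac1p]$ is just $\tilde\beta\otimes\mathbb{Q}_p$; so commutativity is automatic and the only content is the word \emph{injective}. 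Since the bottom horizontal map, once we know the top one is injective, is injective after tensoring with $\mathbb{Q}_p$ over a torsion-free module, it suffices to prove injectivity of the top map $\tilde\beta\colon TB(\G)\to\prod_{U\in S(\G,\A)}B(U^{\mathrm{ab}})$.

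Next I would exploit the factorisation $TB(\G)=\bigoplus_{\text{conj.\ classes}} (\text{abelian piece})$ — more precisely, $TB(\G)=B(\G)/[B(\G),B(\G)]$ decomposes, after the integral logarithm, into a part detected by $\G^{\mathrm{ab}}$ and a part supported on the nontrivial conjugacy classes. The component $\tilde\beta_\G=\mathrm{pr}^\G_{\G^{\mathrm{ab}}}\colon TB(\G)\to B(\G^{\mathrm{ab}})$ already kills exactly the ``non-abelian'' part $\tau(\mathfrak a)$ in the notation of the diagram following Theorem \ref{reductionlog} (with $\mathfrak a=\ker(B(\G)\to B(\G^{\mathrm{ab}}))$). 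So one must show that for $0\neq x\in\tau(\mathfrak a)$ there is some proper $U\supsetneq\A$ with $\tilde{\mathrm{Tr}}^\G_U(x)$ having nonzero image in $B(U^{\mathrm{ab}})$. Here the choice of a \emph{maximal} abelian normal subgroup $\A$ is what makes the family $S(\G,\A)$ rich enough: every conjugacy class of $\G$ that is nontrivial in $TB(\G)$ meets some $U$ with $\A\subseteq U\subsetneq\G$ (using that $\A$ is its own centraliser, a standard fact for maximal abelian normal subgroups of $p$-groups), and for such $U$ the modified trace $\tilde{\mathrm{Tr}}^\G_U$ is, by its defining diagram \eqref{modtrace} together with Oliver's analysis of the trace on $p$-group rings, faithful enough to detect that class. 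I would cite \cite{oliver-book} (and the Iwasawa-theoretic extension used by Ritter and Weiss in \cite{RWIII}) for the key input: the ordinary trace $\mathrm{Tr}^\G_\A$, hence by the commuting square the modified one, separates conjugacy classes lying in $\A$, and combined with the norm/projection compatibility from Lemma \ref{functorial} this pins down $x$.

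The main obstacle I anticipate is precisely controlling how the \emph{modified} trace $\tilde{\mathrm{Tr}}^\G_U$ differs from the naive one on the central/torsion part: Ritter and Weiss' $\tilde{\mathrm{Tr}}$ is built to make \eqref{modtrace} commute with the genuine norm $N^\G_U$ rather than with the transfer, and the correction terms (involving the $[\G:U]$-index factors and $\mathrm{ver}$ maps) must be shown not to introduce a kernel. Concretely, one has to check that the composite $\mathrm{pr}^U_{U^{\mathrm{ab}}}\circ\tilde{\mathrm{Tr}}^\G_U$, summed over all $U\in S(\G,\A)$, has trivial joint kernel; the clean way is to pass through $\mathbb{L}_\G$ — since $\mathbb{L}_\G$ is injective up to the torsion subgroup $\mu(\mathcal O)\times\G^{\mathrm{ab}}$, and that torsion is visibly detected by $\tilde\beta_\G=\mathrm{pr}^\G_{\G^{\mathrm{ab}}}$ — reducing everything to the additive statement where the trace computations are linear-algebraic over $\Lambda_{\mathcal O}(\Gamma)$. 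Once the additive injectivity of $(\mathrm{pr}^U_{U^{\mathrm{ab}}}\mathrm{Tr}^\G_U)_{U}$ on $TB(\G)[\frac1p]$ is established by Oliver-type character arguments (evaluating against the abelian characters of the various $U^{\mathrm{ab}}$, which by the maximality of $\A$ span the relevant class functions on $\G$), the integral statement follows by restricting to the lattice, and the lemma is proved.
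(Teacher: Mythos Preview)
The paper does not actually prove this lemma; it is stated as ``the following lemma of Ritter and Weiss'' and used as a black box in the proof of Theorem~\ref{t_G}, with the implicit reference being \cite{RWIX}. So there is no in-paper proof to compare your proposal against.

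That said, your proposal contains a genuine confusion that would derail an attempted proof. The lemma is a purely \emph{additive} statement about the module $TB(\G)=B(\G)/[B(\G),B(\G)]$ and the maps $\tilde\beta_U=\mathrm{pr}^U_{U^{\mathrm{ab}}}\circ\tilde{\mathrm{Tr}}^\G_U$. The integral logarithm $\mathbb{L}_\G\colon K_1(B(\G))\to TB(\G)$ is a map \emph{into} $TB(\G)$, and the ``torsion subgroup $\mu(\mathcal O)\times\G^{\mathrm{ab}}$'' you invoke lives in $K_1(B(\G))$, not in $TB(\G)$. Your suggestion to ``pass through $\mathbb{L}_\G$'' to prove injectivity of $\tilde\beta$ therefore has no content: the statement to be proved is already on the additive side, and nothing about the kernel of $\mathbb{L}_\G$ is relevant. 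This paragraph of your proposal should be deleted entirely.

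The usable pieces of your outline are (i) that $TB(\G)$ is $p$-torsion-free (it is free over $B(\Z)$ on the finite set of $\G$-conjugacy classes in $H$), so injectivity of $\tilde\beta$ follows from injectivity of $\tilde\beta[\frac1p]$, and (ii) the character-theoretic detection you sketch at the end: after inverting $p$ one has enough abelian characters of the $U^{\mathrm{ab}}$, $U\in S(\G,\A)$, to separate conjugacy classes of $\G$, and here the maximality of $\A$ (hence $C_\G(\A)=\A$) is exactly what guarantees this. What is missing, and is the actual work in Ritter and Weiss, is an explicit computation of the modified trace $\tilde{\mathrm{Tr}}^\G_U$ on conjugacy-class basis elements (this is \cite[lem.~5.1]{RWIX}, also cited later in the paper's proof of Theorem~\ref{t_G}); you gesture at the ``correction terms'' but do not say what they are or why they do not create a kernel. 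Without that computation the argument is incomplete.
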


For the desired contradiction  it thus suffices to show that there exist a $t\in TB(\G)$ such that
$\tilde{\beta}(t)=(t_{U^{\mathrm{ab}}})_{U\in S(\G,\A)},$ because then $t=t_\G$ as $t_\G$ is mapped
to the same tuple under $\tilde{\beta}$ by Lemma \ref{functorial}. We first will search for a $t$
such that
\begin{equation}
\label{t} t\mapsto t_\A,\; t_{\G^{\mathrm{ab}}}.
\end{equation}

Setting $\bar{\G}:=\G/C$ and $\bar{\A}:=\A/C$ we define the ideals $\mathfrak{a}$ and
$\mathfrak{b}$ by exactness of rows in the following commutative diagram
\[\xymatrix{
  0   \ar[r]^{ } & {\mathfrak{a}}  \ar[r]^{ } & B(\G)   \ar[r]^{ } & B(\bar{\G})   \ar[r]^{ } & 0   \\
  0 \ar[r]^{ } &  {\mathfrak{b}} \ar[r]^{ }\ar[u]_{ } & B(\A) \ar[r]^{ }\ar[u]_{ } & B(\bar{\A}) \ar[r]^{ }\ar[u]_{ } & 0.   }\]

Ritter and Weiss show that one has a commutative diagram

\[\xymatrix{
  0   \ar[r]^{ } & {\tau\mathfrak{a}}  \ar[r]^{ }\ar@{->>}[d]_{ } & TB(\G)  \ar[dd]_{\tilde{\mathrm{Tr}}^\G_\A } \ar[r]^{ } & TB(\bar{\G}) \ar[dd]_{\tilde{\mathrm{Tr}}^{\bar{\G}}_{\bar{\A}} }  \ar[r]^{ } & 0
  \\ & \sigma^\G_\A(\mathfrak{b}) \ar@{^(->}[d] \\
  0 \ar[r]^{ } &  {\mathfrak{b}} \ar[r]^{ } & B(\A) \ar[r]^{ }  & B(\bar{\A}) \ar[r]^{ }  & 0,   }\]

By the minimality of $[\G,\G]$ we know that $t_{\bar{\G}}$ belongs to $TB(\bar{\G}).$ We choose any
lift $t=\mathbb{L}_\G(\vartheta) \in TB(\G)$ of $t_{\bar{\G}}$  in the image of $\mathbb{L}_\G$
(which is possible by the same reasoning  as in the proof of Theorem \ref{reductionlog} with the
additional property that $\mathrm{pr}^\A_{\bar{\A}}N^\G_\A(\vartheta)=\lambda_{\bar{\A}}$). Then
the condition (RW3) for the tuple $(\lambda_U)_U$ reads
\[\sum_{\A\subseteq V\subseteq \G} \mu_{\G/\A}(V/\A) \mathrm{ver}^{V }_\A(\lambda_{V^{\mathrm{ab}}})\in \sigma^\G_\A(B(\A))\]
while one easily shows - using the fact that $N^\G_\A(\vartheta)\lambda_\A^{-1}$ lies in the kernel
of $B(\A)^\times\to B(\bar{\A})^\times$ - that for the tuple
$(\mathrm{pr}^U_{U^{\mathrm{ab}}}N^\G_U(\vartheta))_U$ induced by $\vartheta$ it reads
\[N^\G_\A(\vartheta)+\sum_{\A\subsetneqq V\subseteq \G} \mu_{\G/\A}(V/\A) \mathrm{ver}^{V }_\A(\lambda_{V^{\mathrm{ab}}})\in \sigma^\G_\A(B(\A))\]
since \[\mathrm{ver}^{V }_\A(\mathrm{pr}^V_{V^{\mathrm{ab}}}N^\G_V(\vartheta))= \mathrm{ver}^{V
}_\A(\lambda_{V^{\mathrm{ab}}}) \mbox{ for } V\neq\A.\]

This implies that \[N^\G_\A(\vartheta)\lambda_\A^{-1}\in 1+\sigma^\G_\A(B(\A)),\] from which Ritter
and Weiss derive that
\[x:=\tilde{\mathrm{Tr}}^\G_\A(t)-t_\A=\mathbb{L}_\G(N^\G_\A(\vartheta)\lambda_\A^{-1})\in \mathfrak{b}^\G\cap \sigma^\G_\A(B(\A))=\sigma^\G_\A(\mathfrak{b})\]
lies in the image of $\tau(\mathfrak{a})$ under $\tilde{\mathrm{Tr}}^\G_\A.$ Upon modifying $t$
accordingly,  we may and do thus assume that
\[\tilde{\mathrm{Tr}}^\G_\A(t)=t_\A.\] as claimed above.
%

In fact, it follows easily that then   $t$ is also mapped to  $t_{U^{\mathrm{ab}}}$ under
$\tilde{\beta}_U$ for all $U$ such that $c$ is contained in $[U,U].$ Otherwise we have
$C\cap[U,U]=1,$ i.e., the order of $[U,U]$ is strictly smaller than that of $[\G,\G],$ whence
\[t_U\in TB(U)\]
by assumption. We set \[x_U:=x_U(t):=\tilde{\mathrm{Tr}}^\G_U(t)-t_U\in TB(U).\] By our
contraposition the support, i.e., the set \[\mathrm{supp}(t):=\{U\in S(\G,\A)|C\cap[U,U]=1,\;
x_U(t)\neq 0\},\] is non-empty for all choices of $t$ satisfying \eqref{t}. Let $t$ be such a
choice with the order of $\mathrm{supp}(t)$ being minimal and let $U\in \mathrm{supp}(t)$ have
minimal $[U:\A].$ Explicit calculations with $\tilde{\mathrm{Tr}}$ using \cite[lem.\ 5.1]{RWIX},
the minimality of $[\G:Z(\G)]$ and the uniqueness of expressing elements in $TB(U)$ as linear
combinations over $B(\Z)$ similarly as for the proof of $\Psi\to\Psi_{cyc}$  being injective in
\cite[lem.\ 3.5]{sch-venkakde} show firstly that $U/\A$ must be cyclic and secondly that in this
cyclic case one can modify $t$ to an element $t'$ (using \cite[\S 6, claim 6.A]{RWIX}) with
strictly smaller support than $t,$ a contradiction.\hfill$\Box$

At the end we want to investigate whether for $\Lambda(\mathcal{G})$ itself instead of
$B(\mathcal{G})$ the analogous group $\widetilde{\Phi}$ in terms of $\Lambda(-)$ satisfying again
(RW1-4) describes, i.e., equals the image of $K_1(\Lambda(\mathcal{G}))$.
 From Kakde's result we know that his condition (M4) (together with (M1-M3) at
least) imply the M\"{o}bius-Wall congruence (RW3), even though there does not seem to be a direct link
between them. For example (M4) involves only cyclic subgroups while (RW3) ranges over all subgroups
among $\A$ and $\G.$ In fact we will now prove  the converse, viz that (RW1-4) also implies (M1-4).

\begin{theorem}\label{image}
\[K'_1(\Lambda(\G)):= K_1(\Lambda(\G))/SK_1(\Lambda(\G))\cong\tilde{\Phi}.\]
\end{theorem}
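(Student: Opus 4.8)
The plan is to prove the two inclusions separately, using the machinery already set up in the excerpt. The map in one direction is clear: the homomorphism $\theta$ together with the compatibilities proved above (the functoriality of the norm and projection maps) sends $K_1(\Lambda(\G))$ into the subgroup $\widetilde{\Phi}$ of $\prod_{U\in S^{\mathrm{ab}}_\G}\Lambda(U)^\times$ cut out by the analogues of (RW1)--(RW4). Indeed, (RW1) is exactly the transitivity/compatibility of norm and deflation maps on $K_1$, (RW2) is the $\G$-invariance built into the target of $\theta$, (RW4) is the functoriality of $\mathrm{Det}$ under induction and inflation recalled via \cite[Lem.\ 9]{RWII}, and (RW3) is precisely the content of the Möbius--Wall congruence, which Ritter and Weiss prove holds for every tuple arising from an element of $K_1$ (see \cite[thm.\ 2]{RWIX}). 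Moreover this map factors through $K'_1(\Lambda(\G))$ since the kernel of $\mathrm{Det}$, hence of $\theta$, is exactly $SK_1(\Lambda(\G))$ (this is the $\Lambda(\G)$-analogue of the statement $\ker\mathrm{Det}=SK_1$ recalled for $\Lambda(\G)_{S^*}$, and follows from it). Finally the map is injective on $K'_1(\Lambda(\G))$ essentially by definition. So the first step is just to assemble these observations into a well-defined injection $K'_1(\Lambda(\G))\hookrightarrow\widetilde{\Phi}$.

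The substance is surjectivity, and here the strategy is to mimic the entire proof of Theorem \ref{t_G} (the integrality of $t_\G$) and of the converse direction of Theorem \ref{reductionlog}, but now working with $\Lambda(\G)$ in place of $B(\G)$ throughout. Given a tuple $(\lambda_U)_{U\in S^{\mathrm{ab}}_\G}\in\widetilde{\Phi}$, explicit Brauer induction produces, for each $U\in S_\G$, an element $\Xi_U\in\mathrm{Hom}^{(1)}_{G_{\mathbb{Q}_p},R(\Gamma_U)}(R(U),\Lambda_{\mathcal{O}_L}(\Gamma_U)^\times)$ together with $t_U:=\mathrm{Tr}^{-1}(\mathbf{L}_U(\Xi_U))\in T\Lambda(U)[\tfrac1p]$, satisfying $t_U=\mathbb{L}_U(\lambda_U)\in T\Lambda(U)$ for $U$ abelian, and the functoriality of Lemma \ref{functorial} carries over verbatim (it used only (RW1)--(RW4) and the formal properties of $\mathrm{Det}$ and $\mathrm{Tr}^{-1}\circ\mathbf{L}$, all of which are available for $\Lambda$). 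One then runs the contraposition argument of the proof of Theorem \ref{t_G}: pick a minimal counterexample $\G$ (minimal $|[\G,\G]|$, then minimal $[\G:Z(\G)]$), a central $c\in[\G,\G]$ of order $p$, a maximal abelian normal $\A\trianglelefteq\G$, and a central $\Z\cong\mathbb{Z}_p$ in $\A$; use the $\Lambda$-version of the modified trace $\tilde{\mathrm{Tr}}^U_V$ and of Lemma \ref{injectivemodtrace}; choose a lift $\vartheta\in K_1(\Lambda(\G))$ of $t_{\bar\G}$ (available by the same reasoning as in Theorem \ref{reductionlog}); feed the Möbius--Wall congruence (RW3) for the tuple into the comparison between $N^\G_\A(\vartheta)$ and $\lambda_\A$, deduce $N^\G_\A(\vartheta)\lambda_\A^{-1}\in 1+\sigma^\G_\A(\Lambda(\A))$, hence correct $t$ so that $\tilde{\mathrm{Tr}}^\G_\A(t)=t_\A$; and finally shrink the support of $t$ by the calculation with $\tilde{\mathrm{Tr}}$ (using \cite[lem.\ 5.1, \S6 claim 6.A]{RWIX} and the $\Lambda$-analogue of the uniqueness lemma \cite[lem.\ 3.5]{sch-venkakde}) until one contradicts minimality. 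This yields $t_\G\in T\Lambda(\G)$, and then, exactly as in the converse direction of Theorem \ref{reductionlog} together with (RW3a), one produces $\lambda_\G\in K_1(\Lambda(\G))$ with $\mathrm{Det}_\G(\lambda_\G)=\Xi_\G$, i.e.\ with $\theta(\lambda_\G)=(\lambda_U)_U$; the torsion element $\omega$ that remains is killed by the Higman/augmentation argument (the Wall congruence (RW3a), Higman's theorem, the augmentation $\epsilon:\Lambda(\G')\to\Lambda(\Gamma')$ forcing $\zeta\equiv1\bmod p\Lambda(\Gamma')$), which works over $\Lambda$ just as over $B$.

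The main obstacle I anticipate is checking that the two key inputs borrowed from the $B$-theory genuinely survive the passage to $\Lambda$. First, the integral logarithm: one needs $\mathbb{L}_{\G}$ (and the modified trace $\tilde{\mathrm{Tr}}$) to exist and have the stated exact-sequence and functoriality properties with $\Lambda(\G)$ coefficients — in particular the fundamental exact sequence $1\to 1+\mathfrak{a}\to\Lambda(\G)^\times\to\Lambda(\G^{\mathrm{ab}})^\times\to 0$ versus $0\to\tau(\mathfrak{a})\to T\Lambda(\G)\to T\Lambda(\G^{\mathrm{ab}})\to 0$ and the compatibility diagram \cite[lem.\ 7.1--7.2]{RWIX}, all of which Ritter and Weiss prove for $B$; one should check their proofs only use properties (completeness, the relevant ideal structure, $p$-adic convergence of the logarithm on $1+p\Lambda$) equally valid for $\Lambda$. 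Second, and more delicately, Lemma \ref{injectivemodtrace}: the injectivity of $\tilde\beta:T\Lambda(\G)\to\prod_{U\in S(\G,\A)}\Lambda(U^{\mathrm{ab}})$ and of $\tilde\beta[\tfrac1p]$ — the argument over $B$ presumably exploits that $B(\G)=\widehat{\Lambda(\G)_S}$ is a product of matrix rings over complete local rings (a semiperfect or even semisimple-after-reduction structure), which is exactly what fails for $\Lambda(\G)$ itself. Repairing this is the crux: one likely has to localize, i.e.\ deduce injectivity of $\tilde\beta$ over $\Lambda$ from injectivity over $B$ (already known) together with injectivity over $\Lambda[\tfrac1p]=\Lambda_{S^*}$ or over the total ring of fractions, since $\Lambda(\G)$ embeds into $B(\G)\times\Lambda(\G)[\tfrac1p]$ by a Mayer--Vietoris/arithmetic-square argument; making this precise, and likewise checking that the support-shrinking calculation (which over $B$ uses the explicit form of $\tilde{\mathrm{Tr}}$ on matrix rings) still closes over $\Lambda$, is where the real work lies. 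Everything else is a faithful, if lengthy, transcription of the Ritter--Weiss arguments with $B$ replaced by $\Lambda$.
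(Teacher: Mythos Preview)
Your proposal is correct in substance and uses the same underlying ingredients, but the paper organizes the argument quite differently and far more compactly. Rather than proving injectivity and surjectivity of $\tilde{\theta}$ separately and re-running the entire Ritter--Weiss machinery for $\Lambda$ in place of $B$, the paper applies the short five-lemma to the commutative diagram with exact rows
\[
\xymatrix{
  0 \ar[r] & {\mu\times\G^{\mathrm{ab}}} \ar[d]_{\iota} \ar[r] & K_1'(\Lambda(\G)) \ar[d]_{\tilde{\theta}} \ar[r]^{\mathbb{L}} & {\mathrm{im}(\mathbb{L})} \ar[d]^{\tilde{\beta}|\mathrm{im}(\mathbb{L})} \ar[r] & 0 \\
  0 \ar[r] & {\tilde{\Phi}}\cap \prod(\mu\times U^{\mathrm{ab}}) \ar[r] & {\tilde{\Phi}} \ar[r]^{\prod \mathbb{L}_U} & {\mathrm{im}(\tilde{\Phi})} \ar[r] & 0
}
\]
(the $\Lambda$-analogue of diagram~\eqref{5termdiagram}), citing (RW3a) together with \cite[lem.\ 4.9]{sch-venkakde} for $\iota$ being an isomorphism, and Theorem~\ref{t_G} combined with \eqref{imageL} and Lemma~\ref{injectivemodtrace} for $\tilde{\beta}$ restricted to $\mathrm{im}(\mathbb{L})$ being an isomorphism onto $\mathrm{im}(\tilde{\Phi})$. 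The paper explicitly notes that this packaging \emph{avoids} any description of the image of $\tilde{\beta}$. Your concerns about whether Lemma~\ref{injectivemodtrace} and Theorem~\ref{t_G} carry over from $B$ to $\Lambda$ are reasonable to raise, but in fact the proofs use only properties of completed group algebras of pro-$p$ groups shared by both (and $T\Lambda(\G)\hookrightarrow TB(\G)$ via the basis of conjugacy classes over $\Lambda(\Z)\hookrightarrow B(\Z)$), so the paper simply treats them as holding for $\Lambda$ without further comment; your proposed Mayer--Vietoris workaround is unnecessary. What your approach buys is explicitness---you actually exhibit the preimage $\lambda_\G$---whereas the paper's buys brevity and a clean parallel with Kakde's five-lemma strategy.
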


\begin{proof} Consider the commutative diagram with exact rows
\[\xymatrix{
  0   \ar[r]^{ } & {\mu\times\G^{\mathrm{ab}}} \ar[d]_{\iota } \ar[r]^{ } & K_1'(\Lambda(\G)) \ar[d]_{\tilde{\theta}} \ar[r]^{\mathbb{L}} & {\mathrm{im}(\mathbb{L})} \ar[d]^{\tilde{\beta}|\mathrm{im}(\mathbb{L})} \ar[r]^{ } & 0  \\
  0 \ar[r]^{ } & {\tilde{\Phi}}\cap \prod(\mu\times U^{\mathrm{ab}}) \ar[r]^{ } & {\tilde{\Phi}} \ar[r]^{\prod \mathbb{L}_U} &   {\mathrm{im}(\tilde{\Phi})}\ar[r]^{ } &0   }\]
analogous to \eqref{5termdiagram}. The map $\iota$ is an isomorphism by (RW3a) (cp.\ \cite[lem.\
4.9]{sch-venkakde}) while $\tilde{\beta}$ is an isomorphism by Theorem \ref{t_G} combined with
\eqref{imageL} and Lemma \ref{injectivemodtrace} above.
\end{proof}

Note that in  contrast to Kakde's approach an explicit description of the image of $\tilde{\beta}$
is avoided in the above proof. As Kakde pointed out to me such description would be very messy,
because the description of the image of $\tilde{\theta}$ is - among others - in terms of the
additive congruences (RW3) which would translate into terms of the logarithm applied to them for
the image of $\tilde{\beta}$.

Recently, also the kernel of $\mathbb{L}_{B(\mathcal{G})}$ has been determined by Kakde
\cite{kakde-completion} using the result for abelian $\mathcal{G}$ by Ritter and Weiss in
\cite{RWVI}:

\[\ker(\mathbb{L}_{B(\mathcal{G})})=SK_1(B(\mathcal{G}))\times \mu \times \mathcal{G}^{\mathrm{ab}},\]
where \[SK_1(B(\mathcal{G})):=\ker\left( K_1(B(\mathcal{G}))\to
K_1(B(\mathcal{G})[\frac{1}{p}])\right).\] Hence the same proof as above also shows the following

\begin{theorem}\label{imageB}
\[K'_1(B(\G)):= K_1(B(\G))/SK_1(B(\G))\cong\tilde{\Phi}_B.\]
\end{theorem}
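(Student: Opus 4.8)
The plan is to run the same five-term-diagram argument that was already used for Theorem \ref{image}, but now with $B(\G)$ in place of $\Lambda(\G)$. Concretely, I would write down the commutative diagram with exact rows
\[\xymatrix{
  0   \ar[r]^{ } & {\mu\times\G^{\mathrm{ab}}} \ar[d]_{\iota } \ar[r]^{ } & K_1'(B(\G)) \ar[d]_{\tilde{\theta}_B} \ar[r]^{\mathbb{L}_B} & {\mathrm{im}(\mathbb{L}_B)} \ar[d]^{\tilde{\beta}|\mathrm{im}(\mathbb{L}_B)} \ar[r]^{ } & 0  \\
  0 \ar[r]^{ } & {\tilde{\Phi}_B}\cap \prod(\mu\times U^{\mathrm{ab}}) \ar[r]^{ } & {\tilde{\Phi}_B} \ar[r]^{\prod \mathbb{L}_U} &   {\mathrm{im}_B}\ar[r]^{ } &0   }\]
where the top row is exact precisely because of Kakde's computation $\ker(\mathbb{L}_{B(\mathcal{G})})=SK_1(B(\mathcal{G}))\times \mu \times \mathcal{G}^{\mathrm{ab}}$ quoted just above the statement (this is the one ingredient that is genuinely new compared to Theorem \ref{image}, where the corresponding fact about $\Lambda(\G)$ came from \cite[lem.\ 4.9]{sch-venkakde}). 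The bottom row is exact by definition of $\tilde{\Phi}_B$ and the fact, noted in the construction of $\widetilde{\Phi}_B$, that $U^{\mathrm{ab}}\in S^{\mathrm{ab}}_\G$ for every $U\in S_\G$, so that the torsion subgroups match up. Then I would invoke the five lemma: once the two outer vertical maps are isomorphisms, $\tilde{\theta}_B$ is an isomorphism, which is exactly the assertion $K_1'(B(\G))\cong\tilde{\Phi}_B$.

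So the proof reduces to checking that the left and right vertical arrows are isomorphisms. For the left arrow $\iota$: it is injective because $SK_1$ has been divided out, and surjectivity onto $\tilde{\Phi}_B\cap\prod(\mu\times U^{\mathrm{ab}})$ follows from the torsion congruence (RW3a) exactly as in \cite[lem.\ 4.9]{sch-venkakde} — the Higman/augmentation argument reproduced in the ``Claim'' inside the proof of Theorem \ref{reductionlog} shows that a tuple of torsion units satisfying (RW3a) comes from a single element of $\mu\times\G^{\mathrm{ab}}$. For the right arrow: $\tilde{\beta}$ is injective on $TB(\G)[\tfrac1p]$, hence a fortiori on $\mathrm{im}(\mathbb{L}_B)\subseteq TB(\G)$, by Lemma \ref{injectivemodtrace}; and it is surjective onto $\mathrm{im}_B$ by Theorem \ref{t_G} together with \eqref{imageL}, which says precisely that the logarithmic pseudomeasure $t_\G$ — and more generally, for each tuple in $\widetilde{\Phi}_B$, the associated $t_\G$ — lies in $TB(\G)$ and is hit by $\mathbb{L}_B$ applied to a genuine element of $K_1(B(\G))$. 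Here one uses the functoriality statements $\mathrm{pr}^\G_U(t_\G)=t_U$ and $\tilde{\mathrm{Tr}}^\G_U(t_\G)=t_U$ of Lemma \ref{functorial} to identify $\tilde{\beta}(t_\G)$ with the prescribed tuple $(t_{U^{\mathrm{ab}}})_U$.

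The main obstacle is not really in the diagram chase, which is formal once the pieces are in place; it is in making sure that the input theorems, all originally phrased for the $\Lambda$-side or for the specific pseudomeasure $L_{F_\infty/F}$, genuinely transfer to $B(\G)$ and to an arbitrary tuple in $\widetilde{\Phi}_B$. The crucial new point — and the reason this theorem can be stated at all — is the determination of $\ker(\mathbb{L}_{B(\mathcal{G})})$ in \cite{kakde-completion} via \cite{RWVI}; without it the top row of the diagram would not be exact. One should also double-check that Theorem \ref{t_G} and Lemma \ref{injectivemodtrace} were already proved in the requisite generality (for every $(\lambda_U)_U\in\widetilde{\Phi}_B$, not just for the $L$-function tuple), which indeed they were in the preceding section. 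Given that, the assertion is ``the same proof as above'', and I would simply write it as such, pointing to Theorem \ref{image} and indicating the single substitution $\Lambda(\G)\rightsquigarrow B(\G)$ together with the reference to \cite{kakde-completion}.
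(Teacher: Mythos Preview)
Your proposal is correct and matches the paper's own argument exactly: the paper's proof is literally the one-line remark ``the same proof as above also shows the following,'' where ``above'' means Theorem~\ref{image} and the only new input is Kakde's determination of $\ker(\mathbb{L}_{B(\mathcal{G})})=SK_1(B(\mathcal{G}))\times \mu \times \mathcal{G}^{\mathrm{ab}}$ from \cite{kakde-completion}, which is precisely what you identify as making the top row exact. Your unpacking of why $\iota$ and $\tilde{\beta}$ are isomorphisms via (RW3a), Theorem~\ref{t_G}, \eqref{imageL}, and Lemma~\ref{injectivemodtrace} also tracks the paper's proof of Theorem~\ref{image} verbatim.
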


{\bf Question:} Does an analogous statement hold for $A(\mathcal{G})?$

Finally we want to remark that -   fixing a central subgroup $\Z\cong\mathbb{Z}_p$ of $\G$
contained in $\Gamma$ -
 we may replace the infinite index sets $S_\G,S_\G^{\mathrm{ab}}$ by the
finite subsets
\begin{eqnarray*}
 S_{\G,\Z} &:=& \{U| \Z\subseteq U\subseteq \G/C\mbox{
for some } C\trianglelefteq\G \mbox{ with } C\subseteq H\}
\end{eqnarray*} (note that $\Z$ can be considered canonically as subgroup of $\G/C,$ because $\Z\cap C=1$) and
\[ S_{\G,\Z}^{\mathrm{ab}}:=\{U\in S_{\G,\Z}| U\mbox{ abelian}\}.\]

For the corresponding $\widetilde{\Phi}$ we obtain the same statements as in Theorems \ref{image}
and \ref{imageB}.



\def\Dbar{\leavevmode\lower.6ex\hbox to 0pt{\hskip-.23ex \accent"16\hss}D}
  \def\cfac#1{\ifmmode\setbox7\hbox{$\accent"5E#1$}\else
  \setbox7\hbox{\accent"5E#1}\penalty 10000\relax\fi\raise 1\ht7
  \hbox{\lower1.15ex\hbox to 1\wd7{\hss\accent"13\hss}}\penalty 10000
  \hskip-1\wd7\penalty 10000\box7}
  \def\cftil#1{\ifmmode\setbox7\hbox{$\accent"5E#1$}\else
  \setbox7\hbox{\accent"5E#1}\penalty 10000\relax\fi\raise 1\ht7
  \hbox{\lower1.15ex\hbox to 1\wd7{\hss\accent"7E\hss}}\penalty 10000
  \hskip-1\wd7\penalty 10000\box7} \def\Dbar{\leavevmode\lower.6ex\hbox to
  0pt{\hskip-.23ex \accent"16\hss}D}
  \def\cfac#1{\ifmmode\setbox7\hbox{$\accent"5E#1$}\else
  \setbox7\hbox{\accent"5E#1}\penalty 10000\relax\fi\raise 1\ht7
  \hbox{\lower1.15ex\hbox to 1\wd7{\hss\accent"13\hss}}\penalty 10000
  \hskip-1\wd7\penalty 10000\box7}
  \def\cftil#1{\ifmmode\setbox7\hbox{$\accent"5E#1$}\else
  \setbox7\hbox{\accent"5E#1}\penalty 10000\relax\fi\raise 1\ht7
  \hbox{\lower1.15ex\hbox to 1\wd7{\hss\accent"7E\hss}}\penalty 10000
  \hskip-1\wd7\penalty 10000\box7} \def\Dbar{\leavevmode\lower.6ex\hbox to
  0pt{\hskip-.23ex \accent"16\hss}D}
  \def\cfac#1{\ifmmode\setbox7\hbox{$\accent"5E#1$}\else
  \setbox7\hbox{\accent"5E#1}\penalty 10000\relax\fi\raise 1\ht7
  \hbox{\lower1.15ex\hbox to 1\wd7{\hss\accent"13\hss}}\penalty 10000
  \hskip-1\wd7\penalty 10000\box7}
  \def\cftil#1{\ifmmode\setbox7\hbox{$\accent"5E#1$}\else
  \setbox7\hbox{\accent"5E#1}\penalty 10000\relax\fi\raise 1\ht7
  \hbox{\lower1.15ex\hbox to 1\wd7{\hss\accent"7E\hss}}\penalty 10000
  \hskip-1\wd7\penalty 10000\box7}
\providecommand{\bysame}{\leavevmode\hbox to3em{\hrulefill}\thinspace}
\providecommand{\MR}{\relax\ifhmode\unskip\space\fi MR }
\providecommand{\MRhref}[2]{%
  \href{http://www.ams.org/mathscinet-getitem?mr=#1}{#2}
} \providecommand{\href}[2]{#2}

\end{document}